\documentclass[12pt]{article}

\usepackage{amssymb}
\usepackage{booktabs}
\usepackage{multirow}
\evensidemargin0cm
\oddsidemargin0cm
\textwidth16cm
\textheight23cm
\topmargin-2cm

\usepackage{amsmath}
\usepackage{amsthm}
\usepackage{amsfonts}

\newcommand{\mmp}{\mathbb{P}}

\newcommand{\od}{\overset{d}{=}}
\newcommand{\dod}{\overset{d}{\to}}

\newcommand{\me}{\mathbb{E}}
\newcommand{\mr}{\mathbb{R}}
\newcommand{\mn}{\mathbb{N}}

\newcommand{\be}{{\rm beta}}
\newcommand{\Be}{{\rm B}}

\newcommand{\lin}{\underset{n\to\infty}{\lim}}

\newtheorem{thm}{Theorem}[section]
\newtheorem{lemma}[thm]{Lemma}

\newtheorem{cor}[thm]{Corollary}

\newtheorem{assertion}[thm]{Proposition}

\theoremstyle{definition}

\theoremstyle{remark}
\newtheorem{rem}[thm]{Remark}
\begin{document}
\title{On asymptotics of the beta-coalescents}
\date{\today}

\author{
Alexander Gnedin\thanks{
Queen Mary, University of London,
e-mail: a.gnedin@qmul.ac.uk},
\quad
Alexander Iksanov\thanks{
National T. Shevchenko University of Kiev,
e-mail: iksan@univ.kiev.ua},\\
Alexander
Marynych\thanks{
Technical University of Eindhoven,
e-mail: O.Marynych@tue.nl}
\quad and\quad
Martin M\"ohle\thanks{
University of T\"ubingen, e-mail: martin.moehle@uni-tuebingen.de} }
\maketitle

\begin{abstract}
\noindent We show that the total number of collisions in the
exchangeable coalescent process driven by the $\be\, (1,b)$
measure converges in distribution to a 1-stable law, as the
initial number of particles goes to infinity. The stable limit law
is also shown for the total branch length of the coalescent tree.
These results were known previously for the instance $b=1$, which
corresponds to the Bolthausen--Sznitman coalescent. The approach we
take is based on estimating the quality of a renewal approximation
to the coalescent in terms of a suitable Wasserstein distance.
Application of the method to $\be\,(a,b)$-coalescents with $0<a<1$
leads to a simplified derivation of the known $(2-a)$-stable
limit. We furthermore derive asymptotic expansions for the
moments of the number of collisions and of
the total branch length for the $\be\,(1,b)$-coalescent by
exploiting the method of sequential approximations.


\vspace{5mm}

Keywords: Absorption time; asymptotic expansion; beta-coalescent; coupling;
number of collisions; total branch length; Wasserstein distance

\vspace{5mm}

2010 Mathematics Subject Classification:
   Primary 60C05;   
           60G09    
   Secondary 60F05; 
             60J10  

\end{abstract}

\section{Introduction} \label{intro}

Pitman \cite{Pit} and Sagitov \cite{Sag} introduced exchangeable
coalescent processes with multiple collisions, also known as
$\Lambda$-coalescents. A counting process associated with the
$\Lambda$-coalescent is a Markov chain
$\Pi_n=\big(\Pi_n(t)\big)_{t\geq 0}$ with right-continuous paths,
which starts with $n$ particles $\Pi_n(0)=n$ and terminates when a
sole particle remains. The particles merge according to the rule:
 for each $t\geq 0$
when the number of particles is $\Pi_n(t)=m>1$, each $k$ tuple of them is
merging in one particle at probability rate
\begin{equation}\label{lrates}
\lambda_{m,\,k}=\int_{0}^1 x^{k}(1-x)^{m-k}x^{-2}\Lambda({\rm
d}x), \qquad 2\leq k\leq m,
\end{equation}
where $\Lambda$ is a given finite measure on the unit interval.
The event of merging of two or more particles is called {\it collision}.
By every collision $\Pi_n$ jumps to a smaller value.
When $\Lambda$ is a Dirac mass at $0$ the $\Lambda$-coalescent
is the classical Kingman
coalescent \cite{Kingman}, in which every pair of particles is merging at the  unit rate and only binary mergers
are possible.
Another eminent instance, known as
the Bolthausen-Sznitman coalescent \cite{bolt_sznit}, appears
when $\Lambda$ is the Lebesgue measure on $[0,1]$.

The subclass of {\it beta-coalescents} are the processes driven by some
beta measure on $[0,1]$ with density
\begin{equation}\label{beta}
\Lambda({\rm d}x)/{\rm d}x= \frac{1}{\Be(a,b)}
x^{a-1}(1-x)^{b-1},\qquad a,b>0,
\end{equation}
where $\Be(\cdot,\cdot)$ denotes Euler's beta function. This
class is amenable to analysis due to the fact that the transition
rates (\ref{lrates}) can be expressed in terms of $\Be(\cdot,\cdot)$.
For this reason and due to multiple connections with
L{\'e}vy processes and random trees, the beta coalescents were the
subject of intensive research \cite{BeGa, BePi, birkner,
 DIMR1, DIMR2, gold_martin, IksMoe, Pit}. We refer to \cite{BerestLect}
for a survey and further references.

In this paper we study beta-coalescents with parameter $0< a\leq
1$. Specifically, we are interested in the total number of
collisions $X_n$ and the total branch length of the coalescent
tree $L_n$. Note that $X_n$ is equal to the total number of
particles born by collisions, and $L_n$ is the cumulative lifetime
of all particles from the start of the process to its termination.
The variable $L_n$ is closely related to the number of segregating
sites $M_n$, the connection being that given $L_n$ the
distribution of $M_n$ is Poisson with mean $rL_n$ for some fixed
mutation rate $r>0$.

Our first main new contribution is the proof of a $1$-stable limit law
for $X_n$ and $L_n$ as $n\to\infty$. As in much of the previous
work (see, for instance, \cite{GneYak} and \cite{IksMoe2}) we use
a renewal approximation to $\Pi_n$. A novel element in this
context is estimating the quality of approximation in terms of a
Wasserstein distance.

The second new contribution are asymptotic expansions for the
moments of $X_n$, $L_n$ and $M_n$ for the $\be\,(1,b)$-coalescent
with arbitrary parameter $b>0$. These expansions are obtained
independently from the weak limiting results mentioned before. The proofs
are based on the method of sequential approximations similar to those
used in \cite{IksMoeMar}.

The rest of the paper is organized as follows. Section
\ref{survey} gives a summary of some results on limit laws related
to the beta-coalescents. In Section \ref{chain} general properties
of the block-counting Markov chain and basic recurrences are discussed
and the main results are stated. Section \ref{distances_section}
recalls the definition and properties of a Wasserstein distance.
In Section \ref{proofs_section} we provide proofs of the main results.
Some auxiliary lemmas are collected in the appendix.

\section{A summary of limit laws for beta-coalescents}\label{survey}

The tables in this section summarize the limit laws for $X_n$,
$L_n$ and the absorption time of the
 coalescent
$\tau_n:=\min\{t:\Pi_n(t)=1\}$.
The distributions which appear in the tables will be denoted as
follows

\begin{itemize}
\item[(i)] $\cal N$, standard normal,

\item[(ii)] ${\cal S}_\alpha$ with $1<\alpha<2$, (spectrally negative) $\alpha$-stable  distribution with characteristic function

\begin{equation}\label{cf_sta}
z\mapsto \exp\big\{|z|^{\alpha}\big(\cos (\pi \alpha /2)+{\rm
i}\sin (\pi\alpha/2){\rm sgn}(z)\big)\big\}, \ z\in\mr,
\end{equation}
\item[(iii)]  $\mathcal{S}_1$ (spectrally negative) 1-stable distribution with characteristic function
\begin{equation}\label{cf_st1}
z\mapsto \exp\big\{-|z|(\pi/2-{\rm i}\log|z|\,{\rm sgn}(z))\big\},
\ z\in\mr,
\end{equation}

\item[(iv)] ${\cal E}_\gamma (a,b)$ with $a,b,\gamma>0,$ distribution of the {\it exponential functional}
$\int_0^\infty \exp(-\gamma S_{a,\,b}(t)){\rm d}t$, where
$\big(S_{a,\,b}(t)\big)_{t\geq 0}$ is a drift-free subordinator
with the Laplace exponent
$$\Phi_{a,\,b}(z)=\int_0^1\big(1-(1-x)^z\big)x^{a-3}(1-x)^{b-1}{\rm d}x,~~~~~z\geq 0,$$
\item[(v)] $\cal G$, Gumbel with distribution function $x\mapsto \exp\big(-e^{-x}\big), x\in {\mathbb R}$,
\item[(vi)] $\rho$,  convolution of infinitely many
 exponential laws with rates $i(i-1)/2$, $i\geq 2$.

\end{itemize}

\vskip0.3cm

\begin{center}{\bf Table 1:} Limit distributions of $(X_n-a_n)/b_n$ for $\be\,(a,b)$-coalescents.\\
\begin{small}
\begin{tabular}{cccccc}
\toprule
$a$ & $b$ & $a_n$ & $b_n$ & distribution & source\\
\midrule
\multirow{2}*{$0<a<1$} & \multirow{2}*{$b>0$} & \multirow{2}*{$(1-a)n$} & \multirow{2}*{$(1-a)
n^{1/(2-a)}$} & \multirow{2}*{${\cal S}_{2-a}$} & \cite{IksMoe2}($b=1$), \cite{GneYak},\\
& & & & &this paper\\
\midrule
\multirow{2}*{$a=1$} & \multirow{2}*{$b>0$} & $n(\log n)^{-1} + $ & \multirow{2}*{$\frac{n}{(\log n)^{2}}$}
& \multirow{2}*{${\cal S}_1$} & \cite{DIMR2,IksMoe}($b=1$),\\
& & \hspace{1mm} $n\log\log n (\log n)^{-2}$& & &this paper\\
\midrule $1<a<2$ & $b>0$ & 0 & $\frac{\Gamma(a)}{2-a} \, n^{2-a}$ &
{${\cal E}_{2-a}(a,b)$}
& \cite{GIM_coal,Haas}\\
\midrule
$a=2$ & $b>0$ & $(2r_1)^{-1}(\log n)^2$ & $(3^{-1}r_1^{-3}r_2\log^3 n)^{1/2}$ & {$\cal N$} & \cite{GIM_coal,IksMoeMar}\\
\midrule
$a>2$ & $b>0$ & $m_1^{-1}\log n$ & $(m_1^{-3}m_2\log n)^{1/2}$ & {$\cal N$} & \cite{GIM_coal,GneIksMoe}\\
\bottomrule
\end{tabular}
\end{small}
\end{center}

\paragraph{Notation and comments:}
$r_1=\zeta(2,b), \ \ r_2=2\zeta(3,b),$ where $\zeta(\cdot,\cdot)$
is the Hurwitz zeta function; $m_1=\Psi(a-2+b)-\Psi(b), \ \ m_2=\Psi'(b)-\Psi'(a-2+b)$,
where $\Psi(\cdot)$ is the logarithmic derivative of the gamma function.

For the Bolt\-hausen-Sznit\-man coalescent the limit law of $X_n$
was first obtained in \cite{DIMR2} using singularity analysis of
generating functions. A probabilistic proof of this result
appeared in \cite{IksMoe}, where  a coupling with a random walk
with barrier was exploited, and the technique was further extended
in \cite{IksMoe2} to study collisions in the
$\be\,(a,1)$-coalescents with $a\in (0,2)$. 
The aforementioned limit laws for $a>1$ are specializations of
results for more general $\Lambda$-coalescents with {\it dust
component}, i.e., those driven by the measures $\Lambda$ such that
$\int_0^1x^{-1}\Lambda({\rm d}x)<\infty$
\cite{GIM_coal,GneIksMoe,GneYak,Haas}. For  Kingman's
coalescent we have $X_n=n-1$ for all $n\in\mn$.

In the next two tables the value $a=0$ corresponds to
Kingman's coalescent. \vskip0.3cm
\begin{center}{\bf Table 2:} Limit distributions of $(\tau_n-a_n)/b_n$ for $\be\,(a,b)$-coalescents.\\
\begin{small}
\begin{tabular}{cccccc}
\toprule
$a$ & $b$ & $a_n$ & $b_n$ & distribution & source\\
\midrule
$a=0$ &  & $0$ & $1$ & $\rho$ & \cite{Tav}\\
\midrule
$a=1$ & $b=1$ & $\log\log n$ & $1$ & ${\cal G}$ & \cite{gold_martin, FreMoe}\\
\midrule
$1<a<2$ & $b>0$ & ${\tt m}^{-1}\log n$ & $({\tt m}^{-3}{\tt s}^2\log n)^{1/2}$ & ${\cal N}$ & \cite{GIM_coal}\\
\midrule
$a=2$ & $b>0$ & $c_1^{-1}\log n$ & $(c_1^{-3}c_2 \log n)^{1/2}$ & ${\cal N}$ & \cite{GIM_coal}\\
\midrule
$a>2$ & $b>0$ & $(\gamma m_1)^{-1}\log n$ &
$\gamma^{-1}(m_1^{-3}(m_2+m_1^2)\log n)^{1/2}$ & ${\cal N}$ & \cite{GIM_coal,GneIksMoe}\\
\bottomrule
\end{tabular}
\end{small}
\end{center}
\textbf{Notation and comments:}
The constants ${\tt m}$ and ${\tt s}^2$ are
\begin{eqnarray*}
{\tt m}&=&\frac{a+b-1}{(a-1)(2-a)}\bigg(1-(a+b-2)\big(\Psi(a+b-1)-\Psi(b)\big)\bigg),\\
{\tt s}^2&=& \frac{a+b-1}{(a-1)(2-a)}\times
\bigg(2\big(\Psi(a+b-1)-\Psi(b)\big)\nonumber\\
&-&(a+b-2)\big((\Psi(a+b-1)-\Psi(b))^2+\Psi^\prime(b)-\Psi^\prime(a+b-1)\big)\bigg),
\end{eqnarray*}
$c_1=b(b+1)\zeta(2,b), \ \ c_2=2b(b+1)\zeta(3,b).$
The constants $m_1$ and $m_2$ are the same as in Table 1, and for $a>2$
$$\gamma=\frac{(a-1+b)(a-2+b)}{(a-1)(a-2)}.$$

In the case $a\in (0,1)$, $b>0$ the $\be\,(a,b)$-coalescent has the
property of coming down from infinity \cite{Sch}, which implies
that $\tau_n$ weakly converges without any normalization to some
limiting law, which is not known explicitly. The result for $a>1$
is a special case of Theorem 4.3 in \cite{GIM_coal}. The case
$a=1$ and $b\neq 1$ is open; in this case the coalescent does not
come down from infinity.

\vskip0.3cm

\begin{center}{\bf Table 3:} Limit distributions of  $(L_n-a_n)/b_n$ for $\be\,(a,b)$-coalescents.\\
\begin{small}
\begin{tabular}{cccccc}
\toprule
$a$ & $b$ & $a_n$ & $b_n$ & distribution & source\\
\midrule
$a=0$ &  & $2\log n$ & $2$ & $\cal G$ & \cite{DIMR1, Tav}\\
\midrule
$0<a<\frac{3-\sqrt{5}}{2}$ & $b=2-a$ & $c_1n^{a}$ & 1 & exists & \cite{Kersting}\\
\midrule
$a=\frac{3-\sqrt{5}}{2}$ & $b=2-a$ & $c_1n^{a}$ & $c_2(\log n)^{\alpha^{-1}}$ & ${\cal S}_{2-a}$ &\cite{Kersting}\\
\midrule
$\frac{3-\sqrt{5}}{2}<a<1$ & $b=2-a$ & $c_1n^{a}$ & $c_2(\beta n^{-\beta})^{\alpha^{-1}}$ & ${\cal S}_{2-a}$
&\cite{Kersting}\\
\midrule
\multirow{2}*{$a=1$} & \multirow{2}*{$b>0$} & $n(b\log n)^{-1} +$  & \multirow{2}*{$\frac{n}{b(\log n)^{2}}$}
 & \multirow{2}* {${\cal S}_1$}  & \cite{DIMR1}($b=1$),\\
& & \hspace{1mm} $b^{-1}n\log\log n (\log n)^{-2}$& & &this paper\\
\midrule
$a>1$ & $b>0$ & 0 & $\Be(a,b)n$ & ${\cal E}_1(a,b)$ & \cite{Moe06, Moehle2009}\\
\bottomrule
\end{tabular}
\end{small}
\end{center}

\noindent
\textbf{Notation and comments:} The constants are $\alpha=2-a$, $\beta=1+\alpha-\alpha^2$,
$c_1=\frac{\Gamma(\alpha+1)(\alpha-1)}{2-\alpha}$,
$c_2=\frac{\Gamma(\alpha+1)(\alpha-1)^{1+\alpha^{-1}}}{\cos{(\pi\alpha/2)}\Gamma^{\alpha^{-1}}(2-\alpha)}$.

In \cite{Moe06} the weak convergence of properly normalized $L_n$
was proved for $\Lambda$-coalescents with dust component. In
particular, that result covered the $\be\,(a,b)$-coalescents with
$a>1$. Although some partial results for $a\in (0,1)$ and $b>0$
were obtained in \cite{Delmas}, this case with $b\neq 2-a$ remains
open.

\section{Main results}\label{chain}

For the general $\Lambda$-coalescent, the Markov chain
$\Pi_n$ is a pure-death process which jumps from state
$m$ to $m-k+1$ at rate $\binom{m}{k}\lambda_{m,\,k}$, where
$\lambda_{m,\,k}, 2\leq k\leq m,$ is given by \eqref{lrates}. The
total transition rate from state $m\geq 2$ is
\begin{equation}\label{total_rates}
\lambda_m:=\sum_{k=2}^m
\binom{m}{k}
\lambda_{m,\,k}=\int_0^1
\big(1-mx(1-x)^{m-1}-(1-x)^m\big)x^{-2}\Lambda({\rm d}x).
\end{equation}
The first decrement $I_n$ of $\Pi_n$ has distribution
$${\mathbb P}\{I_n=k\}=
\binom{n}{k+1}
\frac{\lambda_{n,\,k+1}}{\lambda_n},\;\;1\leq k\leq n-1.$$
The strong Markov property of the coalescent entails the
distributional recurrences
\begin{equation}\label{xn_rec}
X_1  = 0,\;\;X_n\od 1+X'_{n-I_n}, \ \ n\in\mn\backslash\{1\};
\end{equation}
\begin{equation}\label{tn_rec}
\tau_1 = 0,\;\;\tau_n\od T_n+\tau'_{n-I_n}, \ \
n\in\mn\backslash\{1\};
\end{equation}
\begin{equation}\label{ln_rec}
L_1 = 0,\;\;L_n\od nT_n+L'_{n-I_n}, \ \ n\in\mn\backslash\{1\},
\end{equation}
where $T_n$ denotes the time of the first collision, hence $T_n$
has the exponential law with parameter $\lambda_n$; $X_k^\prime$
(respectively, $\tau_k^\prime$, $L_k^\prime$) is independent of
$I_n$ (respectively, $(T_n, I_n)$) and is distributed like $X_k$
(respectively, $\tau_k$, $L_k$), for each $k\in\mn$.

Letting $\Lambda$ be defined by \eqref{beta} with $a\in (0,1]$ denote by
\begin{equation}\label{pnka}
p^{(a)}_{n,k}:=\mmp\{I_n=n-k\},\;\;k=1,\ldots, n-1.
\end{equation}
Using the leading terms of asymptotic relations
\eqref{total_rates_b}, \eqref{total_rates_a} and
\eqref{total_rates_1} we infer
$$
\lim_{n\to\infty}p^{(a)}_{n,n-k}=\frac{(2-a)\Gamma(k+a-1)}{\Gamma(a)(k+1)!}=:p^{(a)}_{k},\;\;k\in\mn,
$$
hence
\begin{equation}\label{ind_conv}
I_n\dod \xi,\;\;n\to\infty,
\end{equation}
where $\xi$ is a random variable with distribution $(p^{(a)}_k)_{k\in\mn}$.

Consider a zero-delayed random walk $\big(S_n\big)_{n\in\mn_0}$
defined by $$S_0:=0, \ \ S_n:=\xi_1+\ldots+\xi_n, \ \ n\in\mn,$$
where $\big(\xi_j\big)$ are independent copies of $\xi$ with
distribution $(p^{(a)}_k)_{k\in\mn}$, and let
$\big(N_n\big)_{n\in\mn_0}$ be the associated first-passage time
sequence defined by
$$
N_n=\inf\{k\geq 0:S_k \geq n\},\;\;n\in\mn_0.
$$
It is plain that
\begin{equation}\label{renewal_rec}
N_0=0,\;\;N_n\od 1+N'_{n-\xi\wedge
n}=1+N^\prime_{n-\xi}1_{\{\xi<n\}},\;\;n\in\mn,
\end{equation}
where $N'_k$ is independent of $\xi$ and distributed like $N_k$,
for each $k\in\mn$. Comparing \eqref{xn_rec} and
\eqref{renewal_rec} one can expect that if $N_n$ (properly
centered and normalized) converges weakly to some proper and
non degenerate probability law then the same is true for $X_n$
(with the same centering and normalization). This is what we mean
by a renewal approximation mentioned in the Introduction.
This idea was exploited in \cite{GneYak} (for $a\in (0,1)$, $b>0$)
and in  \cite{IksMoe2} (for $a\in (0,1]$, $b=1$) to derive the
limit distribution of $X_n$ from that of $N_n$.
We shall use a method based on probability metrics to show the
stable limits for $a\in (0,1]$ and $b>0$. 

\begin{thm}\label{main_thm} As $n\to\infty$
the number of collisions $X_n$ in the $\be\,(a,b)$-coalescent
satisfies
\begin{itemize}
\item[\rm(i)]
for $0<a<1$ and $b>0$
$$
\frac{X_n-(1-a)n}{(1-a)n^{1/(2-a)}}\dod \mathcal{S}_{2-a},
$$
\item[\rm(ii)]
for $a=1$ and $b>0$,
$$
\frac{\log^2 n}{n} X_n-\log n-\log\log n \dod \mathcal{S}_1.
$$
\end{itemize}
\end{thm}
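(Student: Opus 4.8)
The plan is to read off the stable limits for $X_n$ from those of the first-passage sequence $N_n$, using a Wasserstein estimate to make the renewal approximation rigorous. I would first settle the limit law for $N_n$. From the explicit formula for $p^{(a)}_k$ and Stirling's approximation one gets $p^{(a)}_k \sim \frac{2-a}{\Gamma(a)} k^{a-3}$, hence the tail
\[
\mmp\{\xi>k\}\sim \frac{1}{\Gamma(a)}\,k^{-(2-a)},\qquad k\to\infty .
\]
For $0<a<1$ this places $\xi$ in the domain of attraction of a spectrally positive $(2-a)$-stable law and leaves the mean finite, in fact $\mu=\me\xi=(1-a)^{-1}$ by a short beta-integral computation; the elementary renewal theorem then gives $N_n/n\to 1-a$, i.e.\ centering $(1-a)n$, and the classical inversion between a random walk and its first-passage sequence turns the spectrally positive limit of $S_n$ into the spectrally negative limit $\mathcal S_{2-a}$ for $N_n$, with scaling $n^{1/(2-a)}$. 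For $a=1$ one has the explicit law $\mmp\{\xi=k\}=1/(k(k+1))$ with index-one tail and infinite mean; this is precisely the Bolthausen--Sznitman situation, for which the $\mathcal S_1$ limit together with the logarithmic centering $\log n+\log\log n$ and scaling $n/\log^2 n$ is available in the cited literature.

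The core of the argument is to bound $w_n:=d_1(X_n,N_n)$, the $L^1$-Wasserstein (Kantorovich) distance. The recurrences \eqref{xn_rec} and \eqref{renewal_rec} share the same affine shape, differing only in that the first decrement is distributed according to $(p^{(a)}_{n,n-k})_k$ for $X_n$ and according to the limit $(p^{(a)}_k)_k$ for $N_n$. I would couple $I_n$ and $\xi$ maximally and, conditionally on their values, couple the post-collision copies $X'$ and $N'$ recursively; since the additive $+1$ cancels, this yields an inequality of the schematic form
\[
w_n \le \sum_{k=1}^{n-1} p^{(a)}_{n,n-k}\,w_{n-k} + e_n ,
\]
where the error $e_n$ bounds the contribution of the event $\{I_n\neq\xi\}$ (whose probability is the total-variation distance between the two decrement laws) together with the boundary effect of $\xi\wedge n$. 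The expansions \eqref{total_rates_b}, \eqref{total_rates_a}, \eqref{total_rates_1} show that this total-variation distance decays polynomially in $n$; weighting the index mismatch by the typical jump size converts it to the Wasserstein scale and produces an explicit, summable bound for $e_n$.

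It remains to solve the recursion and transfer the limit. Because the kernel $(p^{(a)}_{n,n-k})_k$ agrees to leading order with the renewal kernel, the homogeneous part behaves like a renewal equation, so the accumulated input $\sum_n e_n$ is amplified by at most the first-passage scale $n/\mu$ (with the appropriate logarithmic correction when $a=1$); the quantitative task is to check that the resulting bound satisfies $w_n=o(b_n)$ for $b_n=(1-a)n^{1/(2-a)}$, respectively $b_n=n/\log^2 n$. Once this is established, the homogeneity and translation invariance of $d_1$ give $d_1\big((X_n-a_n)/b_n,\,(N_n-a_n)/b_n\big)=w_n/b_n\to 0$, and since $d_1$ dominates the bounded-Lipschitz metric, $(X_n-a_n)/b_n$ inherits the weak limit of $(N_n-a_n)/b_n$ obtained in the first step, which is exactly the assertion.

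\textbf{Main obstacle.} I expect the decisive difficulty to lie in the error analysis behind the recursion: one must control the propagation of the decrement-law mismatch through the near-renewal kernel sharply enough that the amplified error stays below the normalization $b_n$. This is most delicate for small and moderate $n$, where $(p^{(a)}_{n,n-k})_k$ differs most from its limit and where the truncation $\xi\wedge n$ in \eqref{renewal_rec} is felt, and the borderline case $a=1$, with its infinite-mean step distribution and logarithmic scaling, is the most demanding.
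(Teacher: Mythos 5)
Your overall architecture coincides with the paper's: approximate $X_n$ by the first-passage sequence $N_n$, import the stable limit for $N_n$ from the renewal literature, and control $d(X_n,N_n)$ through the shared recursive structure, bounding the one-step error by a weighted distance between the laws of $I_n$ and $\xi\wedge n$ and then solving the resulting sub-renewal inequality. For $0<a<1$ this goes through exactly as you describe: the paper also works with $q=1$, obtains the one-step error $O(n^{a-1})$, and the recursion (Lemma 6.1 of \cite{GIM_coal} with $\psi_n=n$) yields $d_1(X_n,N_n)=O(n^a)=o(n^{1/(2-a)})$.

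There is, however, a genuine gap in the case $a=1$, precisely where you anticipated the difficulty: the $L^1$-Wasserstein distance is the wrong metric there. The one-step error is $d_1(I_n,\xi\wedge n)=O(1)$ (indeed $n\,\mmp\{\xi\geq n\}\asymp 1$ since $\mmp\{\xi=k\}=1/(k(k+1))$), and the recursion is iterated of the order of $N_n\sim n/\log n$ times, so the accumulated bound is $d_1(X_n,N_n)=O(n/\log n)$. This is \emph{larger} than the normalization $b_n=n/\log^2n$ by a factor of $\log n$, and your final step $w_n/b_n\to 0$ fails; no sharpening of the error analysis rescues this, because the $O(n/\log n)$ accumulation is intrinsic to the worst-case coupling bound. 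The paper's fix is to work with $d_q$ for $q\in(0,1/2)$: the one-step error becomes $O(n^{q-1})$ by Lemma \ref{pseudo_moments}, the recursion (with $\psi_n=n/\log(n+1)$) gives $d_q(X_n,N_n)=O(n^q/\log n)$, and by the homogeneity $d_q(cX,cY)=|c|^qd_q(X,Y)$ the target is only $o\big(n^q/\log^{2q}n\big)$, which is met exactly when $2q<1$. You would need to replace $d_1$ by $d_q$ with $q<1/2$ (and correspondingly pass from $d_q$ to weak convergence via an estimate such as Lemma \ref{lemma_distances}) to close the argument for $a=1$.
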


As a consequence of our main theorem we also obtain a weak limit
for the total branch length $L_n$ and the number of segregating
sites $M_n$ (see \cite{Moe06}) of the $\be\,(1,b)$-coalescent.
\begin{cor}\label{tbl_cor}
For the total branch length $L_n$ in the $\be\,(1,b)$-coalescent
we have as $n\to\infty$
$$
\frac{b\log^2 n}{n}L_n-\log n-\log\log n \ \dod \ \mathcal{S}_1.
$$
\end{cor}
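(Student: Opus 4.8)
The plan is to derive the limit for $L_n$ from the already-established limit for $X_n$ in Theorem~\ref{main_thm}(ii), using the distributional recurrences \eqref{tn_rec} and \eqref{ln_rec} together with a comparison between $L_n$ and $X_n$. The key observation is that for the $\be\,(1,b)$-coalescent both $X_n$ and $L_n$ satisfy additive recurrences with the same decrement structure governed by $I_n$: from \eqref{xn_rec} each collision contributes $1$ to $X_n$, while from \eqref{ln_rec} each collision contributes $nT_n$ to $L_n$, where $T_n$ is exponential with parameter $\lambda_n$. Since $\me\, nT_n = n/\lambda_n$ and the asymptotics of $\lambda_n$ are controlled by \eqref{total_rates_1}, for $a=1$ one has $\lambda_n\sim bn/\log n$, so the mean time-cost of a collision near size $m$ is asymptotically $(\log m)/b$. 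Thus, heuristically, $L_n$ should behave like $b^{-1}$ times a $\log$-weighted version of $X_n$, which explains the extra $b^{-1}$ factor and the $(\log n)^2$ normalization already appearing in both statements.

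First I would set up a precise coupling or telescoping decomposition. Writing $L_n = \sum_{j} (\text{size at }j)\,T_{(j)}$, where the sum runs over the successive collision epochs and the sizes decrease along the trajectory, I would split the increment $nT_n$ at each step into its conditional mean $n/\lambda_n$ plus a fluctuation. The deterministic part accumulates to something comparable to $b^{-1}\sum_m (\log m)/m$ summed over the visited block sizes, and because the block-counting chain decreases by $I_n\dod\xi$ with $\xi$ having the distribution $(p^{(1)}_k)$, the number of visits to sizes near $m$ is $O(1)$ on average; this lets me relate the accumulated deterministic part to $b^{-1}\log n$ times the collision count. The fluctuation part, being a sum of centered independent exponential contributions, I would control by a second-moment or Wasserstein estimate, showing it is of smaller order than $n/\log^2 n$ after normalization.

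Concretely, the cleanest route is to prove that $\frac{b\log^2 n}{n}L_n$ and $\frac{\log^2 n}{n}X_n$ differ by a term converging to the same centering shift, i.e. that
$$
\frac{b\log^2 n}{n}L_n - \frac{\log^2 n}{n}X_n \ \tp\ 0,
$$
after which Theorem~\ref{main_thm}(ii) and Slutsky's lemma yield the corollary with identical centering $\log n+\log\log n$ and limit $\mathcal{S}_1$. To establish this I would use the renewal approximation already validated in the Wasserstein framework of Section~\ref{distances_section}: replace the coalescent trajectory by the random walk $(S_n)$ and its first-passage times $N_n$, express both $L_n$ and $X_n$ as functionals of the same walk, and bound the Wasserstein distance between the coalescent functional and its renewal surrogate by the same coupling estimates that underlie the proof of the main theorem. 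The replacement of $T_n$ by its mean $1/\lambda_n$ introduces an additional martingale-type error that I would estimate via the variance of the exponential increments, which scales like $\sum_m \lambda_m^{-2}$ over visited sizes and is negligible at the relevant order.

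The main obstacle will be handling the time variables $T_n$ rather than just the combinatorial decrements. For $X_n$ the recurrence \eqref{xn_rec} is purely discrete, so the renewal comparison is a statement about the walk $(S_n)$ alone; but $L_n$ genuinely involves the exponential holding times, and the factor $nT_n$ is heavy-tailed enough that one must verify the fluctuations do not contribute at the $1$-stable scale. The delicate point is that near the start of the process, when $m$ is comparable to $n$, the holding times $T_m$ are of order $(\log m)/(bm)$ and their weighted sum could in principle produce its own stable contribution; I would need to show that the large-jump structure responsible for the $\mathcal{S}_1$ limit is entirely inherited from the decrement distribution of $\xi$ (whose tail $p^{(1)}_k\sim k^{-2}$ drives the $1$-stable behavior), while the holding-time randomness only affects lower-order terms. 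Establishing this cleanly, rather than through a direct singularity analysis, is where the bulk of the technical care in the Wasserstein estimates will be spent.
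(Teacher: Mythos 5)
Your reduction in the third paragraph --- show $\frac{b\log^2 n}{n}L_n-\frac{\log^2 n}{n}X_n\tp 0$ and conclude from Theorem \ref{main_thm}(ii) by Slutsky --- is exactly the paper's strategy. But the quantitative picture you build around it contains an error that, taken at face value, contradicts the statement you are proving. For $a=1$ the total rate satisfies $\lambda_n=bn+O(\log n)$ (see \eqref{total_rates_1} and Lemma \ref{totalrates}), not $\lambda_n\sim bn/\log n$ as you assert; consequently the mean contribution of a collision at block size $m$ to $L_n$ is $m/\lambda_m\to 1/b$, not $(\log m)/b$. Under your heuristic $bL_n$ would exceed $X_n$ by a factor of order $\log n$, so $\frac{\log^2 n}{n}(bL_n-X_n)$ would be of order $\log n$ rather than $o(1)$. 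The correct asymptotics $\lambda_m\sim bm$ is precisely what makes $bmT_m-1$ centered up to $O(m^{-1}\log m)$, and that is the engine of the whole proof.

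Second, you never actually carry out the estimate: you defer it to Wasserstein bounds and worry about heavy tails and a possible stable contribution from the holding times. None of that is needed. The paper conditions on the sequence of visited states $n=i_0>i_1>\cdots>i_k=1$, writes $bL_n-X_n=\sum_{r=0}^{k-1}(bi_rT_{i_r}-1)$ with the $T_j$ independent exponentials of rates $\lambda_j$, and uses $|\me(bkT_k-1)|=O(k^{-1}\log k)$ together with $\me(bkT_k-1)^2=1+O(k^{-1}\log k)$ to obtain $\me(bL_n-X_n)^2=n+O(\log^4 n)$; multiplying by $(\log^2 n/n)^2$ gives convergence to $0$ in $L_2$, hence in probability. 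The exponential holding times are light-tailed, so a plain second-moment bound suffices; no renewal surrogate for $L_n$, no coupling, and no analysis of a putative stable contribution from the times is required. As it stands, your proposal has a genuine gap: the one estimate that carries the proof is both unproved and, in the heuristic form you give, quantitatively wrong.
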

\begin{cor}\label{seg_cites_cor}
For the number of segregating sites $M_n$ in the
$\be\,(1,b)$-coalescent we have as $n\to\infty$
$$
\frac{b\log^2 n}{r n}M_n-\log n-\log\log n \ \dod \
\mathcal{S}_1,
$$
where $r>0$ is the rate of the homogeneous Poisson process on
branches of the coalescent tree.
\end{cor}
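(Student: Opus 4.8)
The plan is to deduce the statement from Corollary~\ref{tbl_cor} by exploiting the conditional Poisson structure of $M_n$. Recall that, given $L_n$, the variable $M_n$ has the Poisson distribution with mean $rL_n$; equivalently, $M_n=N(rL_n)$, where $\big(N(t)\big)_{t\ge 0}$ is a unit-rate Poisson process independent of $L_n$. The guiding idea is that $M_n$ is merely a Poissonian randomization of $rL_n$ whose fluctuations are of smaller order than the scale $n/(b\log^2 n)$ at which $L_n$ is observed, so that the centering and normalization carry over unchanged. Concretely, I would write
\begin{equation*}
\frac{b\log^2 n}{rn}M_n-\log n-\log\log n = Y_n + W_n,
\end{equation*}
where
\begin{equation*}
Y_n:=\frac{b\log^2 n}{rn}\big(M_n-rL_n\big),\qquad W_n:=\frac{b\log^2 n}{n}L_n-\log n-\log\log n .
\end{equation*}
By Corollary~\ref{tbl_cor} we have $W_n\dod\mathcal{S}_1$, so by Slutsky's lemma it suffices to prove $Y_n\tp 0$; the desired conclusion $Y_n+W_n\dod\mathcal{S}_1$ then follows.

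The core of the argument is the estimate $Y_n\tp 0$. Conditionally on $L_n$, the variable $M_n-rL_n$ is centered with variance $rL_n$, so $Y_n$ has conditional mean $0$ and conditional variance
\begin{equation*}
V_n:={\rm Var}\big(Y_n\mid L_n\big) = \Big(\frac{b\log^2 n}{rn}\Big)^{2} rL_n = \frac{1}{r}\cdot\frac{b\log^2 n}{n}\cdot\frac{b\log^2 n}{n}L_n .
\end{equation*}
By Corollary~\ref{tbl_cor}, $\frac{b\log^2 n}{n}L_n=\log n+\log\log n+W_n$ with $W_n=O_{\mmp}(1)$, whence
\begin{equation*}
V_n = \frac{1}{r}\cdot\frac{b\log^2 n}{n}\big(\log n+\log\log n+W_n\big)\tp 0,
\end{equation*}
since the deterministic part is of order $\log^3 n/n\to 0$ and the random part is $O_{\mmp}(1)$ multiplied by the vanishing prefactor $\frac{b\log^2 n}{n}$. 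A conditional Chebyshev inequality now gives, for every $\varepsilon>0$,
\begin{equation*}
\mmp\big(|Y_n|>\varepsilon\mid L_n\big)\le\min\Big(1,\tfrac{V_n}{\varepsilon^2}\Big),
\end{equation*}
and taking expectations, since $\min(1,V_n/\varepsilon^2)\tp 0$ and is bounded by $1$, bounded convergence yields $\mmp(|Y_n|>\varepsilon)\to 0$. Thus $Y_n\tp 0$, and Slutsky's lemma completes the proof.

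The main obstacle is the verification $V_n\tp 0$: one has to check that $L_n$, once inflated by the extra factor $\log^2 n/n$ produced by the squared normalization, still tends to $0$. This is not a statement about moments of $L_n$ but follows softly from Corollary~\ref{tbl_cor}, which pins down the order of $\frac{b\log^2 n}{n}L_n$ as $\log n+O_{\mmp}(1)$, so the additional polynomially small factor $\frac{b\log^2 n}{n}$ defeats the logarithmic growth. Working conditionally and closing the argument through bounded convergence—rather than through a bound on $\me L_n$—keeps the proof self-contained and avoids any appeal to the moment asymptotics developed elsewhere in the paper. The passage from $L_n$ to $M_n$ thus costs nothing beyond this single fluctuation estimate.
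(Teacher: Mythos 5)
Your proof is correct and follows essentially the same route as the paper, which simply notes that the result "follows from the fact that given $L_n$ the distribution of $M_n$ is Poisson with mean $rL_n$" and refers to Corollary 6.2 of the Drmota--Iksanov--M\"ohle--R\"osler paper for the details; you have merely written out that standard conditional-Chebyshev-plus-Slutsky argument explicitly. The key fluctuation estimate $V_n\tp 0$ and the use of Corollary \ref{tbl_cor} are exactly what the cited argument does.
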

   We now turn to the moments of $X_n$, $L_n$ and $M_n$.
   An analysis of these moments provides further insight into the
   structure of these functionals. Our next result concerns the
   asymptotics of the moments of the number of collisions $X_n$ in
   the $\be\,(1,b)$-coalescent.
\begin{thm} 
   \label{main1}
   Fix $b\in (0,\infty)$ and $j\in\mn_0$. The $j$th moment of the number
   of collisions $X_n$ in the $\be\,(1,b)$-coalescent has the
   asymptotic expansion
   \begin{equation} \label{xnmomentasy}
      \me X_n^j\ =\ \frac{n^j}{\log ^jn}
      \bigg(
         1+\frac{m_j}{\log n} + O\bigg(\frac{1}{\log^2n}\bigg)
      \bigg),
      \qquad n\to\infty,
   \end{equation}
   where the sequence $(m_j)_{j\in\mn_0}$ is recursively defined via
   $m_0:=0$ and $m_j:=m_{j-1}+\kappa_j/j$ for $j\in\mn$, with
   $\kappa_j:=(j+b-1)\Psi(j+b)+j-(b-1)\Psi(b)$, $j\in\mn_0$.
\end{thm}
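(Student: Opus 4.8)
The plan is to derive the asymptotic expansion \eqref{xnmomentasy} inductively on the moment order $j$ by exploiting the distributional recurrence \eqref{xn_rec} together with the exact collision probabilities $p^{(1)}_{n,k}$. Writing $x_n^{(j)}:=\me X_n^j$ and conditioning on the first decrement $I_n$, the recurrence $X_n\od 1+X'_{n-I_n}$ yields, after a binomial expansion of $(1+X'_{n-I_n})^j$, a linear recurrence of the form
\begin{equation*}
   x_n^{(j)}\ =\ \sum_{k=1}^{n-1}p^{(1)}_{n,n-k}\sum_{i=0}^{j}\binom{j}{i}x_k^{(i)},
\end{equation*}
which one rearranges by isolating the $k=n-1$ term (corresponding to a binary merger, where $x_{n-1}^{(j)}$ appears on the right) to obtain a genuine forward recursion for $x_n^{(j)}$ in terms of $x_m^{(j)}$ for $m<n$ and the lower-order moments $x_m^{(i)}$, $i<j$. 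This is precisely the method of sequential approximations alluded to in the Introduction and used in \cite{IksMoeMar}: at each stage $j$, the lower moments are already known to the required precision, so they enter as an inhomogeneous forcing term.

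The key technical input is a sufficiently sharp asymptotic expansion of the collision probabilities $p^{(1)}_{n,k}$ for the $\be\,(1,b)$-coalescent, obtained from the explicit form of the rates \eqref{lrates} and the total rate \eqref{total_rates}; in particular one needs $\lambda_n$ and the one-step transition probabilities to two orders of accuracy in $1/\log n$, since the expansion \eqref{xnmomentasy} tracks both the leading $n^j/\log^j n$ term and the $1/\log n$ correction. First I would establish the base case $j=0$ trivially ($\me X_n^0=1$, consistent with $m_0=0$), and the case $j=1$, where $\me X_n$ should reproduce $m_1=\kappa_1=b\,\Psi(b+1)+1-(b-1)\Psi(b)$; this already fixes the form of the ansatz. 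Then, assuming \eqref{xnmomentasy} holds for all orders up to $j-1$, I would substitute the ansatz $x_n^{(j)}=(n^j/\log^j n)(1+m_j/\log n+o(1/\log n))$ into the forward recurrence and match coefficients of the successive powers of $1/\log n$.

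The matching step is where the recursion $m_j=m_{j-1}+\kappa_j/j$ must emerge. The plan is to show that the leading-order balance is automatically satisfied (the $n^j/\log^j n$ terms cancel by consistency of the coalescent dynamics), while the next-order balance produces a relation of the form $j\,m_j = j\,m_{j-1}+\kappa_j$, whose solution telescopes to $m_j=\sum_{i=1}^{j}\kappa_i/i$. The constant $\kappa_j=(j+b-1)\Psi(j+b)+j-(b-1)\Psi(b)$ should appear as the contribution of the $1/\log n$ correction in $p^{(1)}_{n,k}$ convolved against the leading behaviour of the $(j-1)$th and $j$th moments; verifying that the digamma terms assemble exactly into this closed form is the delicate bookkeeping.

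The main obstacle I anticipate is controlling the error term uniformly so that the $O(1/\log^2 n)$ claim is genuinely propagated through the recurrence rather than degrading at each induction step. Because the recursion sums contributions from all smaller indices $k$, a naive bound on the remainder could accumulate and worsen the order; the crux is to estimate the tail of the summation $\sum_k p^{(1)}_{n,n-k}(\cdots)$ carefully, splitting into a dominant range $k$ near $n$ and a negligible remainder, and to show that the forcing terms from the lower moments contribute errors that remain of order $n^j/\log^{j+2}n$. Establishing this stability of the expansion under the sequential-approximation scheme, with explicit constants, is the technical heart of the argument; once it is in place, the algebraic identification of $m_j$ and $\kappa_j$ follows by the coefficient-matching described above.
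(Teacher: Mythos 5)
Your overall architecture coincides with the paper's: induction on $j$, the moment recursion $\me X_n^j=q_{n,j}+\sum_{m=2}^{n-1}p^{(1)}_{n,m}\me X_m^j$ with a forcing term built from the lower-order moments, an asymptotic expansion of the averaged powers $\sum_m p^{(1)}_{n,m}m^\alpha/\log^pm$, and a stability lemma that propagates the $O(n^j/\log^{j+2}n)$ error through the recursion (the paper's Lemma \ref{appstop}, resting on Lemma 6.1 of \cite{GIM_coal}). Two corrections, one cosmetic and one substantive. Cosmetically, the step of ``isolating the $k=n-1$ term'' is a red herring: since $I_n\ge 1$ almost surely, $\me X_n^j$ never appears on the right-hand side and the recursion is already a genuine forward recursion. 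Substantively, your precision analysis is wrong in a way that would make the coefficient matching fail as described. You propose to expand $\lambda_n$ and the transition probabilities ``to two orders of accuracy in $1/\log n$'' and to read off $\kappa_j$ from ``the $1/\log n$ correction in $p^{(1)}_{n,k}$''. In fact the relevant corrections live on the much finer scales $\log n/n$ and $1/n$: the key input (Corollary \ref{appmain}) is
$$
\sum_{m=2}^{n-1}p^{(1)}_{n,m}\frac{m^j}{\log^j m}\ =\ \frac{n^j}{\log^j n}\bigg(1-j\frac{\log n}{n}+\frac{\kappa_j}{n}+O\Big(\frac{1}{n\log n}\Big)\bigg),
$$
and $\kappa_j=c_{b,j,j}$ is extracted from the $1/n$-order coefficient only after the term $-jn^{j-1}/\log^{j-1}n$ cancels against the dominant part $jn^{j-1}/\log^{j-1}n$ of the forcing $q_{n,j}$; the surviving contribution $\kappa_jn^{j-1}/\log^jn$ is what drives $m_j=m_{j-1}+\kappa_j/j$ at the next stage of the sequential approximation. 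A kernel expansion accurate only to relative error $O(1/\log^2n)$ sees none of this --- you need relative accuracy $O(1/(n\log n))$, which is exactly what the paper's Lemmas \ref{totalrates}--\ref{applemma} are engineered to deliver and what your sketch does not yet secure. Supplying that sharp expansion, together with an explicit boundedness lemma for recursions of the form $a_n=r_n+\sum_mp^{(1)}_{n,m}a_m$, is the missing technical content; the algebra identifying $m_j$ then goes through as you describe.
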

   For some more information on the coefficients $m_j$, $j\in\mn$,
   we refer the reader to Eq.~(\ref{mj}) in the proof of the
   following Corollary \ref{centralmoments}, which provides
   asymptotic expansions for the central moments
   of $X_n$ in the $\be\,(1,b)$-coalescent.
\begin{cor} 
   \label{centralmoments}
   Fix $b\in (0,\infty)$ and $j\in\mn\setminus\{1\}$. The $j$th central
   moment of the number of collisions $X_n$ in the $\be\,(1,b)$-coalescent
   has the asymptotic expansion
   \begin{equation} \label{xncentralmomentasy}
      \me(X_n-\me X_n)^j
      \ =\ \frac{(-1)^j}{j}\Be(b,j-1)\frac{n^j}{\log^{j+1}n}
      + O\bigg(\frac{n^j}{\log^{j+2}n}\bigg),\qquad n\to\infty.
   \end{equation}
   In particular, ${\rm Var}(X_n)=(2b)^{-1}n^2/\log^3n+O(n^2/\log^4n)$ as $n\to\infty$.
\end{cor}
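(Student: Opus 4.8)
The plan is to reduce everything to the raw-moment expansion of Theorem~\ref{main1} via the binomial theorem. Writing $\mu_n:=\me X_n$ (so that $\me X_n^0=1$ and $m_0=0$), I start from
$$
\me(X_n-\mu_n)^j=\sum_{i=0}^{j}\binom{j}{i}(-1)^{j-i}\me X_n^i\,\mu_n^{\,j-i}.
$$
By Theorem~\ref{main1}, $\me X_n^i=(n/\log n)^i\big(1+m_i/\log n+O(\log^{-2}n)\big)$, and since $\mu_n=(n/\log n)\big(1+m_1/\log n+O(\log^{-2}n)\big)$ with bounded exponent $j-i$ we get $\mu_n^{\,j-i}=(n/\log n)^{j-i}\big(1+(j-i)m_1/\log n+O(\log^{-2}n)\big)$. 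Multiplying the two expansions,
$$
\me X_n^i\,\mu_n^{\,j-i}=\frac{n^j}{\log^j n}\Big(1+\frac{m_i+(j-i)m_1}{\log n}+O(\log^{-2}n)\Big).
$$

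Substituting into the binomial sum, the coefficient of $n^j/\log^j n$ is $\sum_{i=0}^j\binom{j}{i}(-1)^{j-i}=(1-1)^j=0$, so the leading order cancels. In the coefficient of $n^j/\log^{j+1}n$, namely $\sum_{i=0}^j\binom{j}{i}(-1)^{j-i}\big(m_i+(j-i)m_1\big)$, the $m_1$-part vanishes because $\sum_{i=0}^j\binom{j}{i}(-1)^{j-i}(j-i)=0$ for $j\geq2$, leaving the $j$th forward difference $\Delta^j m(0):=\sum_{i=0}^j\binom{j}{i}(-1)^{j-i}m_i$. The finitely many $O(\log^{-2}n)$ remainders accumulate only to $O(n^j/\log^{j+2}n)$, so the corollary is equivalent to the exact identity $\Delta^j m(0)=\frac{(-1)^j}{j}\Be(b,j-1)$.

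To evaluate this difference I would use $m_i=\sum_{l=1}^i\kappa_l/l$, so that $\Delta m(i)=\kappa_{i+1}/(i+1)$ and hence $\Delta^j m(0)=\sum_{i=0}^{j-1}\binom{j-1}{i}(-1)^{j-1-i}\kappa_{i+1}/(i+1)$. Inserting $\kappa_{i+1}=(i+b)\Psi(i+b+1)+(i+1)-(b-1)\Psi(b)$ and using $(i+b)-(b-1)=i+1$ gives the clean splitting
$$
\frac{\kappa_{i+1}}{i+1}=\Psi(b)+1+\frac{(i+b)\big(\Psi(i+b+1)-\Psi(b)\big)}{i+1}.
$$
The constant $\Psi(b)+1$ is annihilated by the difference, so $\Delta^j m(0)=\sum_{i=0}^{j-1}\binom{j-1}{i}(-1)^{j-1-i}\frac{(i+b)}{i+1}\big(\Psi(i+b+1)-\Psi(b)\big)$. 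The key step is the integral representation $\Psi(i+b+1)-\Psi(b)=\int_0^1 x^{b-1}(1-x^{i+1})/(1-x)\,dx$. After substituting it and evaluating the polynomial sums $\sum_i\binom{j-1}{i}(-1)^{j-1-i}x^{i+1}=x(x-1)^{j-1}$ and $\sum_i\binom{j-1}{i}(-1)^{j-1-i}x^{i+1}/(i+1)=\big((x-1)^j-(-1)^j\big)/j$, the integrand collapses to an elementary expression and I obtain
$$
\Delta^j m(0)=(-1)^j\int_0^1 x^{b-1}\Big(x(1-x)^{j-2}-\tfrac{b-1}{j}(1-x)^{j-1}\Big)dx,
$$
which equals $(-1)^j\big(\Be(b+1,j-1)-\tfrac{b-1}{j}\Be(b,j)\big)$. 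A short manipulation using $\Gamma(b+j)=(b+j-1)\Gamma(b+j-1)$ together with $b-\frac{(b-1)(j-1)}{j}=\frac{b+j-1}{j}$ reduces the bracket to $\frac{1}{j}\Be(b,j-1)$, proving the identity; taking $j=2$ gives $\Be(b,1)=1/b$ and the stated variance $(2b)^{-1}n^2/\log^3n+O(n^2/\log^4n)$.

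The main obstacle is this last algebraic identity — showing that the digamma contributions cancel and that the $j$th finite difference of $(m_i)$ collapses to the rational quantity $\frac{(-1)^j}{j}\Be(b,j-1)$. The decisive device is the integral representation of $\Psi(i+b+1)-\Psi(b)$, which converts the otherwise unwieldy alternating sums of digamma values over $(i+1)$ into Beta integrals that evaluate in closed form; everything else is bookkeeping of binomial coefficients and of the error terms.
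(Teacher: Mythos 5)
Your proposal is correct, and the first half --- expanding $\me(X_n-\me X_n)^j$ by the binomial theorem, observing that $\sum_i\binom{j}{i}(-1)^{j-i}=0$ and $\sum_i\binom{j}{i}(-1)^{j-i}(j-i)=0$ kill the leading order and the $m_1$-contribution, and reducing the corollary to the identity $\sum_{i=0}^j\binom{j}{i}(-1)^{j-i}m_i=\frac{(-1)^j}{j}\Be(b,j-1)$ --- coincides with the paper's argument essentially line for line. Where you genuinely diverge is in the proof of that inversion identity. The paper first derives the closed form $m_j=2j+\sum_{i=0}^{j-1}\Psi(b+i)\sum_{l=i+1}^j 1/l$ and then grinds through a triple alternating sum, using $\sum_{i=l}^j\binom{j}{i}(-1)^{j-i}=\binom{j-1}{l-1}(-1)^{j-l}$, the absorption identity $\frac1l\binom{j-1}{l-1}=\frac1j\binom{j}{l}$, and Pascal's rule to produce the telescoping differences $\Psi(b+k+1)-\Psi(b+k)=1/(b+k)$, finally invoking $\sum_k(-1)^k\binom{n}{k}/(b+k)=\Be(b,n+1)$. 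You instead factor the difference operator as $\Delta^j=\Delta^{j-1}\circ\Delta$, note that $\Delta m(i)=\kappa_{i+1}/(i+1)$ directly from the recursive definition of $(m_j)$, strip off the affine part of $\kappa_{i+1}/(i+1)$ (annihilated since $j\ge 2$), and convert the remaining digamma sum into Beta integrals via $\Psi(i+b+1)-\Psi(b)=\int_0^1 x^{b-1}(1-x^{i+1})/(1-x)\,{\rm d}x$; the alternating sums then collapse to $x(x-1)^{j-1}$ and $((x-1)^j-(-1)^j)/j$ inside the integral. I checked the resulting evaluation $(-1)^j(\Be(b+1,j-1)-\frac{b-1}{j}\Be(b,j))=\frac{(-1)^j}{j}\Be(b,j-1)$ and it is correct. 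Your route avoids the nested triple sum and the reordering via Pascal's identity at the price of importing the integral representation of the digamma difference; the paper's route is purely finite combinatorics. Either way the variance specialization $j=2$, $\Be(b,1)=1/b$, gives $(2b)^{-1}n^2/\log^3 n$ as stated.
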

\begin{rem}
   For $b=1$, Eq.~(\ref{xncentralmomentasy}) reduces to the
   asymptotic expansion (see Panholzer \cite[p.~277 or Theorem 2.1.
   with $\alpha=0$]{Pan2})
   $$
   \me(X_n-\me X_n)^j
   \ =\ \frac{(-1)^j}{j(j-1)}\frac{n^j}{\log^{j+1}n} + O\bigg(\frac{n^j}{\log^{j+2}n}\bigg),
   \qquad n\to\infty
   $$
   of the $j$th central moment of the number
   of collisions $X_n$ for the Bolthausen--Sznitman $n$-coalescent.
\end{rem}
The last result concerns the moments end central moments of the total
branch length $L_n$
of the $\be\,(1,b)$-coalescent. 
\begin{assertion} 
\label{main3}
   Fix $b\in (0,\infty)$ and $j\in\mn_0$. The $j$th moment of the total
   branch length $L_n$ of the $\be\,(1,b)$-coalescent has the asymptotic
   expansion
   \begin{equation} \label{lnmomentasy}
      \me L_n^j\ =\
      \frac{1}{b^j}\frac{n^j}{\log^jn}
      \bigg(
         1+\frac{m_j}{\log n}
         +O\bigg(\frac{1}{\log^2n}\bigg)
      \bigg),
      \qquad n\to\infty,
   \end{equation}
   where the sequence $(m_j)_{j\in\mn_0}$ is defined as in Theorem
   \ref{main1}. Moreover, for $j\in\{2,3,\ldots\}$, the $j$th central
   moment of $L_n$ has the asymptotic expansion
   \begin{equation} \label{lncentralmomentasy}
      \me(L_n-\me L_n)^j\ =\ \frac{(-1)^j}{jb^j}\Be(b,j-1)
      \frac{n^j}{\log^{j+1}n} + O\bigg(\frac{n^j}{\log^{j+2}n}\bigg),
      \qquad n\to\infty.
   \end{equation}
   In particular, ${\rm Var}(L_n)=(2b^3)^{-1}n^2/\log^3n+O(n^2/\log^4n)$ as
   $n\to\infty$.
\end{assertion}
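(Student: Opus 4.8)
The plan is to prove the raw moment expansion \eqref{lnmomentasy} by induction on $j$ and then to deduce the central moments \eqref{lncentralmomentasy} from it, exactly as Corollary \ref{centralmoments} is deduced from Theorem \ref{main1}. Raising the distributional recurrence \eqref{ln_rec} to the $j$th power and taking expectations, using that the holding time $T_n$ (exponential with parameter $\lambda_n$) is independent of the decrement $I_n$ and hence of $L'_{n-I_n}$, together with $\me T_n^i=i!/\lambda_n^i$, gives the exact identity
\[
\me L_n^j=\sum_{i=0}^{j}\binom{j}{i}\frac{i!\,n^i}{\lambda_n^i}\sum_{k=1}^{n-1}p^{(1)}_{n,k}\,\me L_k^{j-i}.
\]
Isolating the $i=0$ summand puts this in the form $A_n[\me L^j]=\sum_{i=1}^{j}\binom{j}{i}(i!\,n^i/\lambda_n^i)\sum_k p^{(1)}_{n,k}\me L_k^{j-i}$, where $A_n[f]:=f(n)-\sum_{k=1}^{n-1}p^{(1)}_{n,k}f(k)$ is the same operator that governs $\me X_n^j$ through \eqref{xn_rec}, namely $A_n[\me X^j]=\sum_{i=0}^{j-1}\binom{j}{i}\sum_k p^{(1)}_{n,k}\me X_k^i$. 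The one extra analytic input I need is the rate asymptotics $\lambda_n\sim bn$, which follows from the leading term of \eqref{total_rates_1}; more precisely $n/\lambda_n=b^{-1}(1+O(1/n))$, so the prefactors $i!\,n^i/\lambda_n^i$ equal $i!\,b^{-i}(1+O(1/n))$ and the $O(1/n)$ discrepancy is negligible on the $1/\log n$ scale of \eqref{lnmomentasy}.

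For the induction I carry the hypothesis in the form $\me L_k^{i}=b^{-i}\me X_k^{i}+O(k^i/\log^{i+2}k)$ for $i<j$, which by Theorem \ref{main1} is equivalent to \eqref{lnmomentasy}. The crucial observation is that on the right-hand side of the $L$-recurrence only the $i=1$ term, $\tfrac{jn}{\lambda_n}\sum_k p^{(1)}_{n,k}\me L_k^{j-1}$, is of the leading order $n^{j-1}/\log^{j-1}n$, while all terms with $i\ge2$ are $O(n^{j-2}/\log^{j-2}n)$; the analogous dominant term for $X_n$ is $j\sum_k p^{(1)}_{n,k}\me X_k^{j-1}$. Substituting the inductive hypothesis and $n/\lambda_n=b^{-1}(1+O(1/n))$ into the dominant term, and using that the weighted sums $\sum_k p^{(1)}_{n,k}(\cdot)$ concentrate near $k=n$, I obtain for the defect $D_n:=\me L_n^j-b^{-j}\me X_n^j$ the source estimate
\[
A_n[D_n]=O\!\left(\frac{n^{j-1}}{\log^{j+1}n}\right),\qquad j\ge2,
\]
the leading error coming from propagating the $O(k^{j-1}/\log^{j+1}k)$ remainder of $\me L_k^{j-1}$ through the sum. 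Since $\me L_n^j$ and $b^{-j}\me X_n^j$ both vanish at $n=1$, the defect is recovered by inverting $A_n$: $D_n$ equals the expected accumulated value of the source term along the embedded jump chain started at $n$ and run until absorption at $1$. Bounding this occupation-time functional — using that the chain spends on the order of $1/\log m$ steps per unit level near level $m$, which is precisely the estimate already needed to prove Theorem \ref{main1} — turns the source bound into $D_n=O(n^j/\log^{j+2}n)$, the error term of \eqref{lnmomentasy}. This closes the induction and, in particular, forces the correction coefficient to be the same $m_j$ as for $X_n$. The base case $j=1$ is even simpler, since $A_n[D_n]=n/\lambda_n-b^{-1}=O(1/n)$ inverts to $O(\log\log n)$.

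It is worth isolating why the same $m_j$, hence the same $\kappa_j$ and the digamma expression, appears: the jump law $(p^{(1)}_{n,k})$ and therefore the operator $A_n$ are shared by $X_n$ and $L_n$; the increments differ only through the mean factor $\me(nT_n)=n/\lambda_n\to b^{-1}$; and the coefficient of the single relevant ($i=1$) term matches after the bookkeeping factor $b^{-1}$. The mismatched factorials $i!$ attached to the terms $i\ge2$ affect only orders strictly below the $O(n^j/\log^{j+2}n)$ error and so never reach the two-term expansion. I expect the genuine obstacle to be the quantitative inversion of $A_n$ on power-logarithmic sources — the occupation-time/Wasserstein-type estimates controlling $A_n^{-1}$ and producing $\kappa_j=(j+b-1)\Psi(j+b)+j-(b-1)\Psi(b)$ — and I would import these lemmas from the proof of Theorem \ref{main1} rather than redevelop them.

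Finally, the central moments \eqref{lncentralmomentasy} follow from \eqref{lnmomentasy} without any new recurrence. Expanding
\[
\me(L_n-\me L_n)^j=\sum_{i=0}^{j}\binom{j}{i}(-1)^{j-i}(\me L_n)^{j-i}\,\me L_n^i
\]
and inserting $\me L_n^i=b^{-i}\me X_n^i+O(n^i/\log^{i+2}n)$, the main part reproduces $b^{-j}\me(X_n-\me X_n)^j$, while each remainder contributes at most $(\me L_n)^{j-i}O(n^i/\log^{i+2}n)=O(n^j/\log^{j+2}n)$. Hence $\me(L_n-\me L_n)^j=b^{-j}\me(X_n-\me X_n)^j+O(n^j/\log^{j+2}n)$, and Corollary \ref{centralmoments} yields the stated leading term $\frac{(-1)^j}{jb^j}\Be(b,j-1)\,n^j/\log^{j+1}n$; the variance assertion is the case $j=2$.
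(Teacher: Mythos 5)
Your argument is correct, and it organizes the work differently from what the paper intends. The paper omits the proof of Proposition \ref{main3} and prescribes a direct adaptation: rerun the three-stage sequential approximation from the proof of Theorem \ref{main1} on the recurrence \eqref{ln_rec}, with the source term now built from the expansion \eqref{gninverseasy} of $1/\lambda_n$ in Lemma \ref{totalrates}, and then repeat the inversion-formula computation of Corollary \ref{centralmoments} to get the central moments. You instead prove the single comparison estimate $\me L_n^j=b^{-j}\me X_n^j+O(n^j/\log^{j+2}n)$ by induction on $j$: the defect $D_n:=\me L_n^j-b^{-j}\me X_n^j$ satisfies $D_n=q_n+\sum_{m}p^{(1)}_{n,m}D_m$ with $D_1=0$ and $q_n=O(n^{j-1}/\log^{j+1}n)$, and Lemma \ref{appstop} (your ``inversion of $A_n$'') applied with $\alpha=j$ and $p=j+2$ gives the claim; after that, both \eqref{lnmomentasy} and \eqref{lncentralmomentasy} are imported wholesale from Theorem \ref{main1} and Corollary \ref{centralmoments}. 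This buys you two things: the constants $m_j$, $\kappa_j$ and the alternating-sum identity producing $\Be(b,j-1)$ never have to be recomputed for $L_n$, and the only genuinely new input is that $n/\lambda_n\to b^{-1}$ fast enough. The underlying machinery is shared (Corollary \ref{appmain} to push the $O(k^{j-1}/\log^{j+1}k)$ remainder through $\sum_m p^{(1)}_{n,m}$, and Lemma \ref{appstop} to solve the recursion), so the two routes are cousins, but yours is the leaner bookkeeping.

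One small correction: by \eqref{gnasy}, $n/\lambda_n=b^{-1}\big(1+O(\log n/n)\big)$, not $b^{-1}\big(1+O(1/n)\big)$, unless $b=1$. This changes nothing of substance, since $O(\log n/n)$ is still far below the $1/\log^2 n$ scale at which \eqref{lnmomentasy} operates; in the base case $j=1$ the source $n/\lambda_n-b^{-1}=O(\log n/n)$ inverts (via Lemma \ref{L61} with $\psi_n=n/\log n$) to $D_n=O(\log n)$ rather than your claimed $O(\log\log n)$, which is still ample for \eqref{lnmomentasy} with $j=1$.
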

Proposition \ref{main3} indicates that $bL_n$ essentially behaves
like $X_n$, in agreement when comparing Theorem 3.1 (ii) with
Corollary 3.2. The proof of Proposition \ref{main3} works
essentially the same as the analogous proofs of Theorem
\ref{main1} and Corollary \ref{centralmoments} for $X_n$. Instead
of the distributional recurrence (\ref{xn_rec}) for
$(X_n)_{n\in\mn}$ one has to work with the distributional
recurrence (\ref{ln_rec}) for $(L_n)_{n\in\mn}$. Since the
expansion of $\me T_n=1/\lambda_n$ is known (see
Lemma \ref{totalrates}), the proofs concerning $X_n$ are readily
adapted for $L_n$. A proof of Proposition \ref{main3} is therefore
omitted. We finally mention that, for the $\be\,(1,b)$-coalescent
with mutation rate $r>0$, expansions for the moments and central moments
of the number of segregating sites $M_n$ can be easily obtained,
since (see, for example, \cite[p.~1417]{DIMR1}) the descending
factorial moments of $M_n$ are related to the moments of $L_n$ via
$\me(M_n)_j=r^j\me L_n^j$, $j\in\mn_0$.

\newcommand{\QUpsilon}{q}

\section{Probability distances $\chi_T$ and $d_{\QUpsilon}$}\label{distances_section}
For real-valued random variables $X$ and $Y$ and $T>0$ the
$\chi_T$-distance between $X$ and $Y$ is defined by
\begin{equation}
 \chi_T(X,Y)=\sup_{|t|\leq T}\big|\me e^{itX}-\me e^{itY}\big|.
\end{equation}
By the continuity theorem for the characteristic functions
convergence in distribution $Z_n\stackrel{d}{\to}Z$ holds if and
only if $\lin \chi_T(Z_n,Z)=0$, for every $T>0$.  

Let $\mathcal{D}_{\QUpsilon}, \QUpsilon\in (0,1],$ be the set of
probability laws on ${\mathbb R}$ with finite $q$th absolute
moment. Recall that $|x-y|^q$ is a metric on $\mathbb R$. The
associated Wasserstein distance on  $\mathcal{D}_{\QUpsilon}$ is
defined by
\begin{equation}\label{was_dis_rv}
d_{\QUpsilon}(X,Y)=\inf\me|\widehat{X}-\widehat{Y}|^{\QUpsilon},
\end{equation}
where the infimum is taken over all couplings $(\widehat{X},\widehat{Y})$ such that
$X\od \widehat{X}$ and $Y\od\widehat{Y}$.

For ease of reference we  summarize properties of $d_{\QUpsilon}$ in the following proposition.

\begin{assertion}\label{was_dis_prop}
Let $X,Y$ be  random variables with finite $q$th absolute moments.
The Wasserstein distance $d_{\QUpsilon}$ has the following
properties:
 \begin{itemize}
\item[\rm (Dist)] $d_{\QUpsilon}(X,Y)$ only depends on marginal distributions of $X$ and $Y$,
  \item[\rm(Inf)] the infimum in {\rm (\ref{was_dis_rv})}
  is attained for some coupling,
  \item[\rm(Rep)] the {\it Kantorovich-Rubinstein} representation holds
        $$
            d_{\QUpsilon}(X,Y)=\sup_{f\in\mathcal{F}_{\QUpsilon}}|\me f(X) - \me f(Y)|,
        $$
           where $\mathcal{F}_{\QUpsilon}:=\{f\in C(\mr):|f(x)-f(y)|\leq |x-y|^{\QUpsilon},\;\;x,y\in\mr\}$,
  \item[\rm(Hom)] $d_{\QUpsilon}(cX,cY)=|c|^{\QUpsilon}d(X,Y)$ for  $c\in\mr$,
  \item[\rm(Reg)] for $X,Y,Z$ defined on the same probability space
 $d_{\QUpsilon}(X+Z,Y+Z)\leq d_{\QUpsilon}(X,Y)$ provided $Z\in{\mathcal D}_q$ is independent of
$(X,Y)$,
  \item[\rm(Aff)] $d_{\QUpsilon}(X+a,Y+a)=d_{\QUpsilon}(X,Y)$ for  $a\in\mr$,
  \item[\rm(Conv)] for $X, X_n\in\mathcal{D}_{\QUpsilon}$
convergence $d_{\QUpsilon}(X_n,X)\to 0,\;\;n\to\infty$ implies
$X_n\dod X$ and $\me|X_n|^{\QUpsilon}\to\me|X|^{\QUpsilon}$.
 \end{itemize}

\end{assertion}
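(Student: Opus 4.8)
The plan is to recognize $d_{\QUpsilon}$ as the $L^1$-type Wasserstein (Kantorovich) distance associated with the metric $\rho(x,y)=|x-y|^{\QUpsilon}$ on $\mr$; this is a genuine metric because $t\mapsto t^{\QUpsilon}$ is subadditive and increasing for $\QUpsilon\in(0,1]$, as the statement already observes. Once this identification is made, properties (Dist), (Inf) and (Rep) are classical facts of optimal transport and I would treat them together. Property (Dist) is immediate from the definition, since the admissible couplings depend only on the marginal laws of $X$ and $Y$. For (Inf), I would argue that the set of couplings with fixed marginals is tight (each marginal is a single probability measure on $\mr$, hence tight, and tightness passes to the product) and weakly closed, so it is weakly compact by Prokhorov's theorem; since $(x,y)\mapsto|x-y|^{\QUpsilon}$ is nonnegative and continuous, the functional $\pi\mapsto\int|x-y|^{\QUpsilon}\,{\rm d}\pi$ is weakly lower semicontinuous by the portmanteau/Fatou argument, and a lower semicontinuous functional on a weakly compact set attains its infimum. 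For (Rep), I would invoke the Kantorovich--Rubinstein duality theorem for the cost $\rho$: its dual is the supremum of $|\me f(X)-\me f(Y)|$ over functions that are $1$-Lipschitz with respect to $\rho$, and ``$1$-Lipschitz with respect to $\rho$'' means exactly $|f(x)-f(y)|\le|x-y|^{\QUpsilon}$, i.e.\ $f\in\mathcal{F}_{\QUpsilon}$. I would cite a standard reference here rather than reproduce the proof.

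The elementary properties (Hom), (Aff) and (Reg) I would verify by direct manipulation of couplings. For (Hom) and (Aff), the maps $(\widehat X,\widehat Y)\mapsto(c\widehat X,c\widehat Y)$ and $(\widehat X,\widehat Y)\mapsto(\widehat X+a,\widehat Y+a)$ set up a bijection between couplings of $(X,Y)$ and couplings of $(cX,cY)$, respectively $(X+a,Y+a)$, under which the cost is multiplied by $|c|^{\QUpsilon}$, respectively left unchanged; taking infima gives both identities. For (Reg), I would take a coupling $(\widehat X,\widehat Y)$ of $(X,Y)$ attaining (or nearly attaining) the infimum and adjoin an independent copy $\widehat Z\od Z$ chosen independent of $(\widehat X,\widehat Y)$. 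Since $Z$ is independent of $(X,Y)$, the pair $(\widehat X+\widehat Z,\widehat Y+\widehat Z)$ has the correct marginals, namely the laws of $X+Z$ and $Y+Z$, and its cost equals $\me|\widehat X-\widehat Y|^{\QUpsilon}=d_{\QUpsilon}(X,Y)$, whence $d_{\QUpsilon}(X+Z,Y+Z)\le d_{\QUpsilon}(X,Y)$.

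The remaining property (Conv), which is the one actually used in the later proofs, I would deduce from (Rep). The moment convergence comes for free once one checks that $f_0(x)=|x|^{\QUpsilon}$ lies in $\mathcal{F}_{\QUpsilon}$: for $\QUpsilon\in(0,1]$ one has $\big||x|^{\QUpsilon}-|y|^{\QUpsilon}\big|\le\big||x|-|y|\big|^{\QUpsilon}\le|x-y|^{\QUpsilon}$ by subadditivity and monotonicity of $t\mapsto t^{\QUpsilon}$, so $|\me|X_n|^{\QUpsilon}-\me|X|^{\QUpsilon}|\le d_{\QUpsilon}(X_n,X)\to0$. For the weak convergence I would show that $\mathcal{F}_{\QUpsilon}$ is convergence-determining: any bounded Lipschitz $g$ satisfies $|g(x)-g(y)|\le C|x-y|^{\QUpsilon}$ with $C=\max(\mathrm{Lip}(g),2\|g\|_\infty)$, splitting according to whether $|x-y|\le1$ (where $|x-y|\le|x-y|^{\QUpsilon}$) or $|x-y|>1$ (where $1\le|x-y|^{\QUpsilon}$); hence $g/C\in\mathcal{F}_{\QUpsilon}$ and (Rep) yields $\me g(X_n)\to\me g(X)$. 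Since the bounded Lipschitz functions are convergence-determining for weak convergence on $\mr$, this gives $X_n\dod X$.

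The genuinely non-elementary ingredients are (Inf) and, above all, (Rep); the remaining items are either immediate or short coupling arguments, and (Conv) is a clean corollary of (Rep). I therefore expect the main obstacle to be the Kantorovich--Rubinstein duality (Rep), whose self-contained proof needs a minimax or Hahn--Banach argument; in a summary proposition of this kind I would simply cite the classical theorem from the monograph literature on probability metrics and optimal transport, flagging that everything the paper subsequently needs flows from it.
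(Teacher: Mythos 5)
Your proposal is correct, and for most items it mirrors the paper's treatment: the authors likewise dispose of (Dist), (Inf), (Rep), (Hom) and (Aff) by citing the literature on Wasserstein/Mallows metrics, and their proof of (Reg) is exactly your coupling argument (take an optimal coupling $(X',Y')$ independent of $Z$, note $X+Z\overset{d}{=}X'+Z$, $Y+Z\overset{d}{=}Y'+Z$, and bound the cost). The one place where you genuinely diverge is (Conv). You derive $X_n\overset{d}{\to}X$ from the Kantorovich--Rubinstein representation by checking that every bounded Lipschitz $g$ satisfies $g/C\in\mathcal{F}_{q}$ with $C=\max(\mathrm{Lip}(g),2\|g\|_\infty)$, and then invoking that bounded Lipschitz functions are convergence-determining; your moment convergence via $x\mapsto|x|^{q}\in\mathcal{F}_{q}$ is also fine. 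The paper instead routes the weak convergence through characteristic functions: its Lemma 4.2 shows $\sup_{|t|\le T}|\mathbb{E}e^{itX}-\mathbb{E}e^{itY}|\le C_{T,q}\,d_{q}(X,Y)$ using the elementary bound $|e^{ix}-e^{iy}|\le 2^{1-q}M_{q}|x-y|^{q}$, and then applies the continuity theorem. Both arguments are valid; the paper's choice is economical because that same $\chi_T$-versus-$d_q$ comparison is the workhorse of the proof of Theorem 3.1 and so has to be established anyway, whereas your route is self-contained given (Rep) and avoids introducing the auxiliary metric $\chi_T$ at this stage.
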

\begin{proof}
We refer to \cite{GivensShortt,JohnsonSamworth} for most of these
facts. To prove (Reg) choose an independent of $Z$ coupling
$(X',Y')$ on which the infimum in the definition of $d_\QUpsilon$
is
attained. 
Then $X+Z\od X'+Z$, $Y+Z\od Y'+Z$ and the definition of
$d_{\QUpsilon}$ entails
\begin{eqnarray*}
d_{\QUpsilon}(X+Z,Y+Z)&\leq&\me|(X'+Z)-(Y'+Z)|^{\QUpsilon}=\me|X'-Y'|^{\QUpsilon}=d_{\QUpsilon}(X,Y).
\end{eqnarray*}
Property (Conv): the convergence of moments is easy; the rest is a
consequence of Lemma \ref{lemma_distances} to follow.
\end{proof}




\begin{lemma}\label{lemma_distances}
For  $T>0$ and $\QUpsilon\in (0,1]$ there exists constant
$C=C_{T,\QUpsilon}>0$ such that
$$
\sup_{|t|\leq T}|\me e^{itX}-\me e^{itY}|\leq C
d_{\QUpsilon}(X,Y),\;\;n\in\mn.
$$
\end{lemma}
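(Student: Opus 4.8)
The plan is to reduce the statement to the coupling definition \eqref{was_dis_rv} of $d_{\QUpsilon}$, invoking only one elementary pointwise estimate for the complex exponential. The single analytic ingredient I would isolate first is the inequality
\[
|e^{itx}-e^{ity}|\ \le\ 2^{1-\QUpsilon}|t|^{\QUpsilon}|x-y|^{\QUpsilon},
\qquad x,y,t\in\mr .
\]
To obtain it, write $|e^{itx}-e^{ity}|=|1-e^{it(y-x)}|=2|\sin(t(x-y)/2)|$, so that this quantity is bounded by $\min\{2,|t|\,|x-y|\}$, combining the trivial bound $2$ with the Lipschitz bound $|t|\,|x-y|$. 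Setting $s:=|t|\,|x-y|$, it then suffices to check the interpolation inequality $\min\{2,s\}\le 2^{1-\QUpsilon}s^{\QUpsilon}$ for all $s\ge0$: for $s\le2$ one has $\min\{2,s\}=s=s^{\QUpsilon}s^{1-\QUpsilon}\le 2^{1-\QUpsilon}s^{\QUpsilon}$ since $s^{1-\QUpsilon}\le 2^{1-\QUpsilon}$, while for $s>2$ one has $\min\{2,s\}=2=2^{1-\QUpsilon}2^{\QUpsilon}\le 2^{1-\QUpsilon}s^{\QUpsilon}$.

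With this in hand the rest is immediate. I would take an arbitrary coupling $(\widehat X,\widehat Y)$ with $X\od\widehat X$ and $Y\od\widehat Y$; since expectations of functions depend only on the marginals,
\[
\big|\me e^{itX}-\me e^{itY}\big|
=\big|\me\big(e^{it\widehat X}-e^{it\widehat Y}\big)\big|
\le\me\big|e^{it\widehat X}-e^{it\widehat Y}\big|
\le 2^{1-\QUpsilon}|t|^{\QUpsilon}\,\me|\widehat X-\widehat Y|^{\QUpsilon}.
\]
Taking the infimum over all such couplings on the right-hand side replaces the last expectation by $d_{\QUpsilon}(X,Y)$. Finally, for $|t|\le T$ I bound $|t|^{\QUpsilon}\le T^{\QUpsilon}$ and take the supremum over $|t|\le T$, which yields the claim with the explicit constant $C_{T,\QUpsilon}=2^{1-\QUpsilon}T^{\QUpsilon}$.

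There is no serious obstacle here; the assertion is essentially a one-line consequence of the definition once the pointwise bound is established. The only point that needs genuine care is the interpolation inequality $\min\{2,s\}\le 2^{1-\QUpsilon}s^{\QUpsilon}$, which correctly interpolates (with the sharp constant $2^{1-\QUpsilon}$) between the uniform bound and the Lipschitz bound and is precisely what allows a first-order difference to be controlled by a $\QUpsilon$th-order one. As an alternative I could instead invoke the Kantorovich--Rubinstein representation (Rep) of Proposition \ref{was_dis_prop}, applied to the test functions $x\mapsto\cos(tx)$ and $x\mapsto\sin(tx)$ after rescaling them into $\mathcal{F}_{\QUpsilon}$; but routing directly through the coupling definition is cleaner, avoids separating real and imaginary parts, and gives a marginally better constant.
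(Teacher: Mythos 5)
Your proposal is correct and follows essentially the same route as the paper: both rest on the pointwise bound $|e^{ix}-e^{iy}|=2|\sin((x-y)/2)|\le \mathrm{const}\cdot|x-y|^{\QUpsilon}$ followed by passing to a coupling realizing (or approximating) $d_{\QUpsilon}(X,Y)$. The only cosmetic differences are that the paper packages the constant as $2^{1-\QUpsilon}M_{\QUpsilon}$ with $M_{\QUpsilon}=\sup_{u>0}|\sin u|\,u^{-\QUpsilon}$ and works with the attained infimum, whereas you make the constant explicit via the interpolation $\min\{2,s\}\le 2^{1-\QUpsilon}s^{\QUpsilon}$ and take the infimum over arbitrary couplings at the end.
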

\begin{proof}
Assume that the infimum in the definition of $d_{\QUpsilon}(X,Y)$
is attained on $(\hat{X},\hat{Y})$. It is easy to check that for
arbitrary $\QUpsilon\in(0,1]$
\begin{equation}\label{eit}
 |e^{ix}-e^{iy}|=2\Big|\sin{\frac{x-y}{2}}\Big|\leq 2^{1-\QUpsilon}M_{\QUpsilon}|x-y|^{\QUpsilon},\;\;x,y\in\mr,
\end{equation}
where $M_{\QUpsilon}:=\sup_{u>0}|\sin{u}|u^{-\QUpsilon}<\infty$.
Hence
\begin{eqnarray*}
\sup_{|t|\leq T}|\me e^{itX}-\me e^{itY}|&=&\sup_{|t|\leq T}|\me e^{it\hat{X}}-\me e^{it\hat{Y}}|\leq \sup_{|t|\leq T}
\me |e^{it\hat{X}}-\me e^{it\hat{Y}}|\\
&\overset{\eqref{eit}}{\leq}&
2^{1-\QUpsilon}M_{\QUpsilon}\sup_{|t|\leq T}|t|^{\QUpsilon}\me|\hat{X}-\hat{Y}|^{\QUpsilon}\leq
2^{1-\QUpsilon}M_{\QUpsilon}T^{\QUpsilon}d_{\QUpsilon}(X,Y),
\end{eqnarray*}
as wanted.
\end{proof}

\section{Proofs}\label{proofs_section}
\subsection{Proof of Theorem \ref{main_thm}}
Suppose $a=1$. It is enough to show that
$$
\lin \chi_T\Big(\frac{\log^2 n}{n} X_n-\log n-\log\log n,
\mathcal{S}_1\Big)=0,$$ for every $T>0$.

Using the triangle inequality yields
\begin{eqnarray*}
&&\hspace{-10mm}\chi_T\Big(\frac{\log^2 n}{n} X_n-\log n-\log\log n, \mathcal{S}_1\Big)\leq\\
&&\hspace{-10mm}\chi_T\Big(\frac{\log^2 n}{n} X_n-\log n-\log\log n, \frac{\log^2 n}{n} N_n-\log n-\log\log n \Big)\\
&+&\chi_T\Big(\frac{\log^2 n}{n} N_n-\log n-\log\log n,
\mathcal{S}_1\Big).
\end{eqnarray*}
The second term converges to zero by Proposition 2 in
\cite{IksMoe} on stable limit for the number of renewals. In view
of Lemma \ref{lemma_distances} to prove  convergence to zero of
the first term it is sufficient to check that
$$
\lin d_{\QUpsilon}\Big(\frac{\log^2 n}{n} X_n-\log n-\log\log n,
\frac{\log^2 n}{n} N_n-\log n-\log\log n \Big)= 0,$$ for some
$\QUpsilon\in (0,1]$, which in view of the properties (Hom) and
(Aff) in Proposition \ref{was_dis_prop} amounts to the estimate
\begin{equation}\label{o_est_rv1}
 d_{\QUpsilon}(X_n,N_n)=o(n^{\QUpsilon}\log^{-2\QUpsilon}n),\;\;n\to\infty.
\end{equation}
In the like way, proving Theorem \ref{main_thm} in the case $a\in
(0,1)$ reduces to showing that
\begin{equation}\label{o_est_rva}
d_{\QUpsilon}(X_n,N_n)=o(n^{\QUpsilon/(2-a)}), \ \ n\to\infty,
\end{equation}
for some $\QUpsilon\in (0,1]$.

Using recurrences \eqref{xn_rec} for $X_n$ and \eqref{renewal_rec}
for $N_n$ we obtain
\begin{eqnarray*}
t_n&:=&d_{\QUpsilon}(X_n,N_n)=d_{\QUpsilon}(X'_{n-I_n},N'_{n-(\xi\wedge n)})\leq d_{\QUpsilon}
(N'_{n-I_n},N'_{n-(\xi\wedge n)})
+ d_{\QUpsilon}(X'_{n-I_n},N'_{n-I_n})\\
    &\leq& d_{\QUpsilon}(N'_{n-I_n},N'_{n-(\xi\wedge n)}) +
    \me|\widehat{X}_{n-I_n}-\widehat{N}_{n-I_n}|^{\QUpsilon}=:
c_n+\sum_{k=1}^{n-1}\mmp\{I_n=n-k\} \me|\widehat{X}_{k}-\widehat{N}_{k}|^{\QUpsilon},
\end{eqnarray*}
for arbitrary pairs
$\big((\widehat{X}_k,\widehat{N}_k)\big)_{1\leq k\leq n-1}$
independent of $I_n$ such that $\widehat{X}_k\od X_k$,
$\widehat{N}_k\od N_k$. Passing to the infimum over all such pairs
leads to
\begin{equation}\label{recursion_rv1}
t_n\leq c_n+\sum_{k=1}^{n-1}\mmp\{I_n=n-k\}t_k.
\end{equation}
We shall use \eqref{recursion_rv1} to estimate $t_n$.

First we find an appropriate bound for $c_n$. Let
$(\hat{I}_n,\hat{\xi})$ be a coupling of $I_n$ and $\xi$ such that
(recall (Inf) in Proposition \ref{was_dis_prop})
$d_{\QUpsilon}(I_n,\xi\wedge n)=\me|\hat{I}_n-\hat{\xi}\wedge
n|^{\QUpsilon}$. Let $\big(\hat{N}_k\big)_{k\in\mn}$ be a copy of
$\big(N_k\big)_{k\in\mn}$ independent of $(\hat{I}_n,\hat{\xi})$.
Since $\big(\hat{I}_n,\hat{\xi},\big(\hat{N}_k\big)\big)$ is a
particular coupling we have
\begin{eqnarray*}
c_n=d_{\QUpsilon}(N'_{n-I_n},N'_{n-(\xi\wedge n)})\leq\me|\hat{N}_{n-\hat{I}_n}-\hat{N}_{n-(\hat{\xi}
\wedge n)}|^{\QUpsilon}.
\end{eqnarray*}
Exploiting the stochastic inequality
$$
N_{x+y}-N_x\overset{d}{\leq} N_y,\;\;~~x,y\in{\mathbb N}
$$
yields
\begin{eqnarray*}
\me|\hat{N}_{n-\hat{I}_n}-\hat{N}_{n-\hat{\xi}\wedge n}|^{\QUpsilon}\leq\me\hat{N}^{\QUpsilon}_{|\hat{I}_n-\hat{\xi}
\wedge n|}.
\end{eqnarray*}
Furthermore, we obviously have
$N_n\leq n$, hence
$$
c_n\leq\me|\hat{I}_n-\hat{\xi}\wedge n|^{\QUpsilon}=d_{\QUpsilon}(I_n,\xi\wedge n).
$$
Now we invoke the Kantorovich-Rubinstein representation ((Rep) in Proposition \ref{was_dis_prop})
for $d_{\QUpsilon}$.
Set $\mathcal{F}_{\QUpsilon,0}:=\mathcal{F}_{\QUpsilon}\cap \{f:f(0)=0\}$ and note that $f\in\mathcal{F}_{\QUpsilon,0}$ implies $|f(x)|\leq |x|^{\QUpsilon},\;\;x\in\mr$.
We have
\begin{eqnarray*}
 c_n&\leq&d_{\QUpsilon}(I_n,\xi\wedge n)=\sup_{f\in\mathcal{F}_{\QUpsilon}}\Big|\me f(I_n) -\me f(\xi\wedge n)\Big|=\sup_{f\in\mathcal{F}_{\QUpsilon,0}}\Big|\me f(I_n) -\me f(\xi\wedge n)\Big|\\
    &=&\sup_{f\in\mathcal{F}_{\QUpsilon,0}}\Big|\sum_{k=1}^{n-1}\mmp\{I_n=k\}f(k) -\sum_{k=1}^{n-1}
    \mmp\{\xi=k\}f(k)-f(n)\sum_{k\geq n}\mmp\{\xi=k\}\Big|\\
    &\leq&\sum_{k=1}^{n-1}\Big|\mmp\{I_n=k\}-\mmp\{\xi=k\}\Big|k^{\QUpsilon} +n^{\QUpsilon}\mmp\{\xi\geq n\}.
\end{eqnarray*}

For appropriate $\QUpsilon\in(0,1]$ (to be specified below) such
that $a+\QUpsilon>1$ use Lemma \ref{pseudo_moments} in the
Appendix along with the relation $\mmp\{\xi\geq n\}=O(n^{a-2})$ to
obtain the
estimate $c_n=O(n^{\QUpsilon+a-2})$. With this bound for $c_n$ a
$O$-estimate for $t_n$ follows using Lemma \ref{boundedness}.

If $a\in (0,1)$ one can take $q=1$. Then the cited lemma applies
with $\psi_n=n$ and $r_n=Mn^{a-1}$ ($M$ large enough) and gives
estimate
$$
d_{\QUpsilon}(X_n,N_n)=O(n^a),
$$
which implies \eqref{o_est_rva}.

For the case $a=1$ application of the same lemma
with $\psi_n=n/(\log (n+1))$ and $r_n=Mn^{\QUpsilon-1}$ ($M$ large
enough) leads to $t_n\leq Mn^{\QUpsilon}(\log n)^{-1}$. Thus
\eqref{o_est_rv1} holds for $\QUpsilon\in(0,1/2)$. The proof is
complete.

\subsection{Proof of Corollaries \ref{tbl_cor} and  \ref{seg_cites_cor}.}
We follow closely the proofs of Theorem 5.2 and Corollary 6.2 in \cite{DIMR1}.
In view of
$$
\frac{b\log^2 n}{n} L_n-\log n-\log\log n =\frac{\log^2 n}{n}
X_n-\log n-\log\log n +\frac{\log^2 n}{n}\Big(bL_n-X_n\Big),
$$
it is enough to show that $\frac{\log^2 n}{n}\Big(bL_n-X_n\Big)\to
0$ in $L_2$.  

Let $T_j$'s be independent exponential variables with rates
$\lambda_j, j\geq 2$.
Assuming the $T_j$'s
independent of the sequence of
states visited by $\Pi_n$ we may identify $T_j$
with  the time $\Pi_n$  spends in the state $j$ provided this state is visited.
Given the sequence of visited states is
$n =
i_0
> i_1
> \cdots
> i_{k-1}
> i_k = 1$ the total branch length $L_n$ is distributed like
$\sum_{r=0}^{k-1} i_r T_{i_r}$ for $n\in\mn\backslash\{1\}$.

For $k \in \{1,\ldots,n\}$ and ${\rm i} = (i_0,\ldots,i_k)$ with
$n = i_0 > i_1 > \cdots > i_{k-1} > i_k = 1$ define the events
$A_{k,\,{\rm i}}:=\{X_n = k, (\Pi_n(t_0),\ldots,\Pi_n(t_k) ) =
{\rm i}\}$, where $t_0=0$ and $t_1<t_2<\ldots$ are the collision
epochs. We have
\begin{eqnarray*}
\me (bL_n - X_n)^2 &=&
\sum_{k,\,{\rm i}}\mmp\{A_{k,\,{\rm i}}\}\me
\Big(\sum_{r=0}^{k-1}(b i_rT_{i_r}-1)\Big)^2\\
&=&\sum_{k,\,{\rm i}}\mmp\{A_{k,\,{\rm i}}\}\Big(
\sum_{r=0}^{k-1}\me(bi_rT_{i_r}-1)^2 + \sum_{r,s=0,r\neq
s}^{k-1}\me(bi_rT_{i_r}-1)(bi_sT_{i_s}-1)\Big)
\end{eqnarray*}
Furthermore,  $\lambda_n=bn+O(\log n)$ as $n\to\infty$ for $a=1$
and $b>0$ (see \eqref{total_rates_1}) which implies
$$
|\me (bkT_k-1)| = O(k^{-1}\log k)\text{  and  }\me (bkT_k-1)^2 = 1+O(k^{-1}\log k).
$$
Therefore,
\begin{eqnarray*}
\me (bL_n - X_n)^2 &\leq& \sum_{k,\,{\rm i}}\mmp\{A_{k,\,{\rm
i}}\}\Big( \sum_{r=2}^n \me(brT_{r}-1)^2 +
\Big(\sum_{r=2}^n |\me(brT_{r}-1)|\Big)^2\Big)\\
&=&\sum_{k,\,{\rm i}}\mmp\{A_{k,\,{\rm i}}\}\Big(n + O(\log^4
n)\Big)=n+O(\log^4 n),
\end{eqnarray*}
and the convergence in $L_2$ follows.

Corollary \ref{seg_cites_cor} follows from the fact that given
$L_n$ the distribution of $M_n$ is Poisson with mean $r L_n$.
See Corollary 6.2 in \cite{DIMR1} for details.
\subsection{Proofs of Theorem \ref{main1} and Corollary \ref{centralmoments}}
  Let us verify (\ref{xnmomentasy}) by induction on $j\in\mn$. From
   (\ref{xn_rec}) it follows that $a_1:=\me X_1=0$ and
   $a_n:=\me X_n=1 + \sum_{m=2}^{n-1}p^{(1)}_{n,m}a_m$, $n\in\mn\setminus\{1\}$.
   In the following we apply the method of sequential approximations to the sequence
   $(a_n)_{n\in\mn}$. The sequence $(b_n)_{n\in\mn}$, defined
   via $b_1:=0$ and $b_n:=a_n-n/\log n$ for $n\in\mn\setminus\{1\}$,
   satisfies the recursion
   \begin{eqnarray*}
      b_n
      & = & a_n - \frac{n}{\log n}
      \ = \ 1 + \sum_{m=2}^{n-1}p^{(1)}_{n,m}\bigg(\frac{m}{\log m}+b_m\bigg) - \frac{n}{\log n}
      \ = \ q_n + \sum_{m=2}^{n-1}p^{(1)}_{n,m}b_m,
   \end{eqnarray*}
   $n\in\mn\setminus\{1\}$, where
   $q_n:=1-n/\log n+\sum_{m=2}^{n-1}p^{(1)}_{n,m}m/\log m$,
   $n\in\mn\setminus\{1\}$.
   By Corollary \ref{appmain} (applied with $\alpha:=1$ and $p:=1$),
   $$
   q_n\ =\ 1 - \frac{n}{\log n} + \bigg(\frac{n}{\log n}-1 + \frac{m_1}{\log n}+O\bigg(\frac{1}{\log^2n}\bigg)\bigg)
   \ =\ \frac{m_1}{\log n} + O\bigg(\frac{1}{\log^2n}\bigg),
   $$
   where $m_1:=c_{b,1,1}=2+\Psi(b)$. The sequence
   $(c_n)_{n\in\mn}$, defined via $c_1:=0$ and $c_n:=b_n-m_1n/\log^2n$
   for $n\in\mn\setminus\{1\}$, therefore satisfies the recursion
   $$
   c_n
   \ = \ b_n - m_1\frac{n}{\log^2n}
   \ = \ q_n + \sum_{m=2}^{n-1}p^{(1)}_{n,m}\bigg(m_1\frac{m}{\log^2m}+c_m\bigg) - m_1\frac{n}{\log^2n}
   \ = \ q_n' + \sum_{m=2}^{n-1}p^{(1)}_{n,m}c_m,
   $$
   $n\in\mn\setminus\{1\}$,
   where $q_n':=q_n - m_1n/\log^2n + m_1\sum_{m=2}^{n-1}p^{(1)}_{n,m}m/\log^2m$,
   $n\in\mn\setminus\{1\}$.
   By Corollary \ref{appmain} (applied with $\alpha:=1$ and $p:=2$),
   \begin{eqnarray*}
      q_n'
      & = & q_n - m_1\frac{n}{\log^2n} + m_1\bigg(\frac{n}{\log^2n}-\frac{1}{\log n}+O\bigg(\frac{1}{\log^2n}\bigg)\bigg)
      \ = \ O\bigg(\frac{1}{\log^2n}\bigg),
   \end{eqnarray*}
   since $q_n=m_1/\log n+O(1/\log^2n)$. By Lemma \ref{appstop}
   (applied with $\alpha:=1$ and $p:=3$), it follows that
   $c_n=O(n/\log^3n)$. Thus, (\ref{xnmomentasy}) holds for $j=1$.
   Assume now that $j\ge 2$. From
   $\me X_{I_n}^j=\me(X_n-1)^j
   =\sum_{i=0}^{j-1}\binom{j}{i}
   (-1)^{j-i}\me X_n^i + \me X_n^j
   $
   it follows that
   \begin{eqnarray*}
      a_{n,j}\ :=\ \me X_n^j
      & = & \sum_{i=0}^{j-1}
            \binom{j}{i}
            (-1)^{j-1-i}\me X_n^i + \me X_{I_n}^j
      \ = \ q_{n,j} + \sum_{m=2}^{n-1}p^{(1)}_{n,m}a_{m,j},
   \end{eqnarray*}
   $n\in\mn\setminus\{1\}$,
   where
   $q_{n,j}:=\sum_{i=0}^{j-1}
   \binom{j}{i}
   (-1)^{j-1-i}\me X_n^i$,
   $n\in\mn\setminus\{1\}$. Since, by induction, for all $i<j$,
   $$
   \me X_n^i\ =\ \frac{n^i}{\log^in}
   \bigg(1+\frac{m_i}{\log n}+O\bigg(\frac{1}{\log^2n}\bigg)\bigg),
   $$
   it follows that 
   (the summand for $i=j-1$ asymptotically dominates the others)
   $$
   q_{n,j}\ =\ \frac{jn^{j-1}}{\log^{j-1}n}\bigg(1 + \frac{m_{j-1}}{\log n}
   + O\bigg(\frac{1}{\log^2n}\bigg)\bigg).
   $$
   Now apply the method of sequential approximations to the sequence $(a_{n,j})_{n\in\mn}$.
   The sequence $(b_{n,j})_{n\in\mn}$, defined via $b_{1,j}:=0$ and
   $b_{n,j}:=a_{n,j}-n^j/\log^jn$ for $n\in\{2,3,\ldots\}$, satisfies
   the recursion
   $$
   b_{n,j}\ =\ q_{n,j}' + \sum_{m=2}^{n-1}p^{(1)}_{n,m} b_{m,j},\quad n\in\{2,3,\ldots\},
   $$
   where $q_{n,j}':=q_{n,j} - n^j/\log^jn + \sum_{m=2}^{n-1}p^{(1)}_{n,m}m^j/\log^jm$,
   $n\in\{2,3,\ldots\}$. By Corollary \ref{appmain} (applied with $\alpha:=j$
   and $p:=j$),
   \begin{eqnarray*}
      q_{n,j}'
      & = & j\frac{n^{j-1}}{\log^{j-1}n} + jm_{j-1}\frac{n^{j-1}}{\log^jn}
            + O\bigg(\frac{n^{j-1}}{\log^{j+1}n}\bigg) - \frac{n^j}{\log^jn}\\
      &   & \hspace{1cm}
            + \frac{n^j}{\log^jn} - j\frac{n^{j-1}}{\log^{j-1}n}
            + \kappa_j\frac{n^{j-1}}{\log^jn} + O\bigg(\frac{n^{j-1}}{\log^{j+1}n}\bigg)\\
      & = & jm_j\frac{n^{j-1}}{\log^jn}
            + O\bigg(\frac{n^{j-1}}{\log^{j+1}n}\bigg),
   \end{eqnarray*}
   where $\kappa_j:=c_{b,j,j}$ and $m_j:=m_{j-1}+\kappa_j/j$.
   The sequence $(c_{n,j})_{n\in\mn}$, defined via $c_{1,j}:=0$ and
   $c_{n,j}:=b_{n,j}-m_jn^j/\log^{j+1}n$ for $n\in\{2,3,\ldots\}$,
   therefore satisfies the recursion
   $$
   c_{n,j}\ =\ q_{n,j}'' + \sum_{m=2}^{n-1}p^{(1)}_{n,m}c_{m,j},
   \quad n\in\{2,3,\ldots\},
   $$
   where $q_{n,j}'':=q_{n,j}' - m_jn^j/\log^{j+1}n
   + m_j\sum_{m=2}^{n-1}p^{(1)}_{n,m}m^j/\log^{j+1}m$,
   $n\in\{2,3,\ldots\}$.
   By Corollary \ref{appmain} (applied with $\alpha:=j$ and $p:=j+1$),
   \begin{eqnarray*}
      q_{n,j}''
      & = & jm_j\frac{n^{j-1}}{\log^jn} + O\bigg(\frac{n^{j-1}}{\log^{j+1}n}\bigg)
            -m_j\frac{n^j}{\log^{j+1}n}\\
      &   & \hspace{1cm}
            + m_j\bigg(\frac{n^j}{\log^{j+1}n}-j\frac{n^{j-1}}{\log^jn}+O\bigg(\frac{n^{j-1}}{\log^{j+1}n}\bigg)\bigg)
      \ = \ O\bigg(\frac{n^{j-1}}{\log^{j+1}n}\bigg).
   \end{eqnarray*}
   By Lemma \ref{appstop} (applied with $\alpha:=j$ and $p:=j+2$),
   it follows that $c_{n,j}=O(n^j/\log^{j+2}n)$, which shows that
   (\ref{xnmomentasy}) holds for $j$. The induction is
   complete which finishes the proof of Theorem \ref{main1}.

   We now turn to the proof of Corollary \ref{centralmoments}. Let us
   first verify that the sequence $(m_j)_{j\in\mn_0}$, recursively defined
   in Theorem \ref{main1}, satisfies the inversion formula
   \begin{equation} \label{inversion}
      \sum_{i=0}^j \binom{j}{i}(-1)^{j-i}m_i\ =\ \frac{(-1)^j}{j}\Be(b,j-1),
      \quad j\in\mn\setminus\{1\}.
   \end{equation}
   Using the formula
   $\Psi(x+1)=\Psi(x)+1/x$, $x\in (0,\infty)$, it is readily checked
   that $\kappa_{j+1}-\kappa_j=2+\Psi(b+j)$, $j\in\mn_0$. For all $j\in\mn_0$
   it follows that $\kappa_j=\sum_{i=0}^{j-1}(\kappa_{i+1}-\kappa_i)
   =\sum_{i=0}^{j-1}(2+\Psi(b+i))=2j + \sum_{i=0}^{j-1}\Psi(b+i)$ and
   \begin{equation} \label{mj}
      m_j
       = \sum_{l=1}^j (m_l-m_{l-1})
       = \sum_{l=1}^j \frac{\kappa_l}{l}
       = \sum_{l=1}^j \bigg(2+\frac{1}{l}\sum_{i=0}^{l-1}\Psi(b+i)\bigg)\\
       = 2j + \sum_{i=0}^{j-1}\Psi(b+i)\sum_{l=i+1}^j\frac{1}{l}.
   \end{equation}
   By (\ref{mj}), for $j\in\{2,3,\ldots\}$,
   \begin{eqnarray*}
      &   & \hspace{-10mm}\sum_{i=0}^j \binom{j}{i}(-1)^{j-i}m_i
      \ = \ \sum_{i=1}^j \binom{j}{i}(-1)^{j-i}
            \bigg(2i+\sum_{k=0}^{i-1}\Psi(b+k)\sum_{l=k+1}^i\frac{1}{l}\bigg)\\
      & = & \sum_{i=1}^j\binom{j}{i}(-1)^{j-i}\sum_{k=0}^{i-1}\Psi(b+k)\sum_{l=k+1}^i\frac{1}{l}\\
      & = & \sum_{k=0}^{j-1}\Psi(b+k)\sum_{l=k+1}^j\frac{1}{l}\sum_{i=l}^j \binom{j}{i}(-1)^{j-i}
      \ = \ \sum_{k=0}^{j-1}\Psi(b+k)\sum_{l=k+1}^j\frac{1}{l}\binom{j-1}{l-1}(-1)^{j-l}\\
      & = & \frac{1}{j}\sum_{k=0}^{j-1}\Psi(b+k)\sum_{l=k+1}^j \binom{j}{l}(-1)^{j-l}
      \ = \ \frac{1}{j}\sum_{k=0}^{j-1}\Psi(b+k)\binom{j-1}{k}(-1)^{j-1-k}.
   \end{eqnarray*}
   Plugging in $\binom{j-1}{k}=\binom{j-2}{k-1}+\binom{j-2}{k}$ and
   reordering with respect to $\binom{j-2}{k}$ leads to
   \begin{eqnarray*}
      \sum_{i=0}^j\binom{j}{i}(-1)^{j-i}m_i
      & = & \frac{1}{j}\sum_{k=0}^{j-2} (-1)^{j-2-k}
            \binom{j-2}{k}(\Psi(b+k+1)-\Psi(b+k))\\
      & = & \frac{(-1)^j}{j}\sum_{k=0}^{j-2} (-1)^k\binom{j-2}{k}\frac{1}{b+k}
      \ = \ \frac{(-1)^j}{j}\Be(b,j-1),
   \end{eqnarray*}
   where the last equality holds, since
   $\sum_{k=0}^n(-1)^k\binom{n}{k}/(b+k)=\Be(b,n+1)$
   for all $n\in\mn_0$, which is for example readily verified by induction
   on $n\in\mn_0$. Thus, (\ref{inversion}) is established.

   Thanks to Theorem \ref{main1} and the inversion formula (\ref{inversion})
   the proof of Corollary \ref{centralmoments} is now straightforward.
   Basically the same argument has for example been used by Panholzer
   \cite[p.~277]{Pan2}. Plugging in the expansion (\ref{xnmomentasy}) for
   the ordinary moments shows that
   \begin{eqnarray*}
      &   & \hspace{-10mm}
            \me(X_n-\me X_n)^j
      \ = \ \sum_{i=0}^j \binom{j}{i}(-1)^{j-i}\me X_n^i (\me X_n)^{j-i}\\
      & = & \sum_{i=0}^j \binom{j}{i}(-1)^{j-i}
            \frac{n^i}{\log^in}
            \bigg(1+\frac{m_i}{\log n}+O\bigg(\frac{1}{\log^2n}\bigg)\bigg)
            \bigg(\frac{n}{\log n}
            \bigg(1+\frac{m_1}{\log n}+O\bigg(\frac{1}{\log^2n}\bigg)\bigg)\bigg)^{j-i}\\
      & = & \frac{n^j}{\log^jn}\sum_{i=0}^j \binom{j}{i}(-1)^{j-i}
            \bigg(1 + \frac{m_i}{\log n}+O\bigg(\frac{1}{\log^2n}\bigg)\bigg)
            \bigg(1+\frac{(j-i)m_1}{\log n}+O\bigg(\frac{1}{\log^2n}\bigg)\bigg)\\
      & = & \frac{n^j}{\log^jn}
            \sum_{i=0}^j \binom{j}{i}(-1)^{j-i}
            \bigg(1+\frac{(j-i)m_1+m_i}{\log n}+O\bigg(\frac{1}{\log^2n}\bigg)\bigg)\\
      & = & \frac{n^j}{\log^jn}\sum_{i=0}^j \binom{j}{i}(-1)^{j-i}
            + \frac{n^j}{\log^{j+1}n}\sum_{i=0}^j\binom{j}{i}
            (-1)^{j-i}((j-i)m_1+m_i) + O\bigg(\frac{n^j}{\log^{j+2}n}\bigg)\\
      & = & \frac{n^j}{\log^{j+1}n}\frac{(-1)^j}{j}\Be(b,j-1)
            + O\bigg(\frac{n^j}{\log^{j+2}n}\bigg),
   \end{eqnarray*}
   since, for $j\ge 2$,
   $\sum_{i=0}^j\binom{j}{i}(-1)^{j-i}=0$,
   $\sum_{i=0}^j\binom{j}{i}(-1)^{j-i}(j-i)=0$, and
   $\sum_{i=0}^j\binom{j}{i}(-1)^{j-i}m_i=(-1)^j/j\Be(b,j-1)$
   by (\ref{inversion}). The proof of Corollary \ref{centralmoments}
   is complete.

\section{Appendix}
For each $n\in\mn$ let $(p_{n,\,k})_{0\le k\le n}$ be
an arbitrary probability distribution with $p_{n,\,n}<1$. Define a sequence
$(a_n)_{n\in\mn}$ as a (unique) solution to the recursion
\begin{equation}\label{recursion}
a_n=r_n+\sum_{k=0}^n p_{n,k}a_k,\quad n\in\mn,
\end{equation}
with given $r_n\ge 0$ and given initial value $a_0=a\ge 0$. The
following result is Lemma 6.1 from \cite{GIM_coal}.
\begin{lemma}\label{L61}
\label{boundedness} Suppose there exists a sequence
$(\psi_n)_{n\in\mn}$ such that
\begin{itemize}
\item[\rm(C1)]
$\liminf_{n\to\infty}\psi_n\sum_{k=0}^{n}(1-k/n)p_{n,k}>0$,
\item[\rm(C2)] the sequence $(r_k\psi_k/k)_{k\in\mn}$ is non-increasing.
\end{itemize}
Then ${a_n}$, defined by \eqref{recursion}, satisfies
\begin{equation}\label{bounded}
a_n=O\Big(\sum_{k=1}^{n}\frac{r_k\psi_k}{k}\Big),\quad n\to\infty.
\end{equation}
\end{lemma}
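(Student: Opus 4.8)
The plan is to prove \eqref{bounded} by a supersolution (monotone comparison) argument, looking for constants $C,D>0$ with $a_n\le CB_n+D$ for all $n$, where I abbreviate $B_n:=\sum_{k=1}^n r_k\psi_k/k$ and $d_n:=\sum_{k=0}^n(1-k/n)p_{n,k}$, so that (C1) reads $\liminf_{n}\psi_n d_n>0$. The starting observation is that, since $(p_{n,k})_{0\le k\le n}$ is a probability distribution, the recursion \eqref{recursion} is equivalent to
$$
\sum_{k=0}^n p_{n,k}(a_n-a_k)=r_n,\qquad n\in\mn,
$$
in which the $k=n$ term drops out. Since $B_n\to\infty$ in all the applications, a bound of the form $a_n\le CB_n+D$ immediately yields the asserted $a_n=O(B_n)$, the additive constant being absorbed.

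The heart of the matter is a lower bound for $\sum_{k=0}^n p_{n,k}(B_n-B_k)$. For $k\le n$ one has $B_n-B_k=\sum_{j=k+1}^n r_j\psi_j/j$, and by the monotonicity assumption (C2) each of these $n-k$ summands is at least $r_n\psi_n/n$; hence $B_n-B_k\ge (n-k)\,r_n\psi_n/n$. Summing against $p_{n,k}$ and using $\sum_{k=0}^n(n-k)p_{n,k}=n\,d_n$ gives
$$
\sum_{k=0}^n p_{n,k}(B_n-B_k)\ \ge\ \frac{r_n\psi_n}{n}\sum_{k=0}^n(n-k)p_{n,k}\ =\ r_n\,\psi_n d_n.
$$
By (C1) there are $c>0$ and $N$ with $\psi_n d_n\ge c$ for $n\ge N$, so this lower bound is at least $c\,r_n$ for $n\ge N$.

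To finish I would fix $C:=1/c$ and choose $D$ large enough that $a_n\le CB_n+D$ for the finitely many indices $n\le N$ (e.g. $D:=\max_{0\le n\le N}a_n$), then prove $a_n\le CB_n+D$ for all $n$ by strong induction, the base $n\le N$ being built in. For the step with $n>N$, assume the bound for every $k<n$ and use the form $a_n(1-p_{n,n})=r_n+\sum_{k=0}^{n-1}p_{n,k}a_k$ together with $\sum_{k=0}^{n-1}p_{n,k}=1-p_{n,n}$. The constant $D$ then cancels, and the inductive step reduces exactly to
$$
r_n\ \le\ C\sum_{k=0}^n p_{n,k}(B_n-B_k),
$$
which holds for $n\ge N$ by the previous paragraph and the choice $C=1/c$. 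This closes the induction and yields \eqref{bounded}.

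The only genuinely delicate point is the lower bound of the second paragraph: it is there that (C2) (monotonicity of $r_k\psi_k/k$) and (C1) (the weighted mass $\psi_n\sum_k(1-k/n)p_{n,k}$ staying bounded away from $0$) must be combined so that the test sequence $B_n$ dominates the forcing $r_n$ at every scale; the comparison step afterwards is pure bookkeeping. A secondary point to state carefully is that the conclusion $a_n=O(B_n)$ requires $B_n$ to be eventually bounded away from $0$ (indeed $B_n\to\infty$ in the applications), so that the additive constant $D$ can be absorbed into $CB_n$.
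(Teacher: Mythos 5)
The paper offers no proof of this lemma: it is imported verbatim as Lemma 6.1 of \cite{GIM_coal}, so there is nothing in-paper to compare your argument against and it must be judged on its own. It is correct. The two load-bearing steps both check out: rewriting \eqref{recursion} as $\sum_{k=0}^{n}p_{n,k}(a_n-a_k)=r_n$ is legitimate because $\sum_{k}p_{n,k}=1$, and the comparison inequality $\sum_{k=0}^{n}p_{n,k}(B_n-B_k)\ge \tfrac{r_n\psi_n}{n}\sum_{k=0}^{n}(n-k)p_{n,k}=r_n\psi_n d_n\ge c\,r_n$ for $n\ge N$ is exactly where (C2) (each increment $r_j\psi_j/j$ with $k<j\le n$ is at least $r_n\psi_n/n$) and (C1) enter. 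The induction then closes: since $p_{n,n}<1$ you may solve for $a_n$, the hypothesis $a_k\le CB_k+D$ for $k<n$ gives $a_n(1-p_{n,n})\le r_n+C\sum_{k<n}p_{n,k}B_k+D(1-p_{n,n})$, and after cancelling $D$ the required bound reduces to $r_n\le C\sum_{k=0}^{n}p_{n,k}(B_n-B_k)$, which holds with $C=1/c$. Two small points you should make explicit. First, the base case needs $B_n\ge 0$: this is automatic because (C1) forces $\psi_n>0$ for large $n$, hence $r_n\psi_n/n\ge 0$ eventually, and a non-increasing sequence that is eventually nonnegative is nonnegative throughout, so $B_n$ is nonnegative and non-decreasing. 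Second, your caveat about absorbing the additive constant $D$ is genuinely needed --- as literally stated the lemma fails if, say, $p_{n,0}\equiv 1$, $r_n\equiv 0$ and $a_0>0$ --- but it is harmless here since $\sum_{k\le n}r_k\psi_k/k\to\infty$ in every application the paper makes of the lemma (in the proof of Theorem \ref{main_thm} and in Lemma \ref{appstop}).
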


\begin{lemma} \label{appstop}
   Let $(a_n)_{n\in\mn}$ be a sequence of real numbers satisfying
   the recursion $a_1=0$ and $a_n=q_n+\sum_{m=2}^{n-1} p^{(1)}_{n,m} a_m$,
   $n\in\mn\setminus\{1\}$, for some given sequence
   $(q_n)_{n\in\mn\setminus\{1\}}$. If $q_n=O(n^{\alpha-1}/\log^{p-1}n)$
   for some given constants $\alpha\in (0,\infty)$ and $p\in [0,\infty)$,
   then $a_n=O(n^\alpha/\log^pn)$.
\end{lemma}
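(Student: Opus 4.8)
The plan is to establish the bound $|a_n|\le M\,n^\alpha/\log^pn$ for all large $n$ by strong induction, with the constant $M$ fixed only at the end. Since the recursion is linear and the hypothesis controls only $|q_n|$, I first pass to absolute values: from $a_1=0$ and $a_n=q_n+\sum_{m=2}^{n-1}p^{(1)}_{n,m}a_m$ one gets $|a_n|\le|q_n|+\sum_{m=2}^{n-1}p^{(1)}_{n,m}|a_m|$, so it suffices to majorize the nonnegative sequence $|a_n|$. Write $g(m):=m^\alpha/\log^pm$ for $m\ge2$ and $g(1):=0$ (consistent with $a_1=0$), and let $C_1$ be such that $|q_n|\le C_1n^{\alpha-1}/\log^{p-1}n$. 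Assuming $|a_m|\le Mg(m)$ for all $2\le m<n$, and recalling that $\sum_{m=1}^{n-1}p^{(1)}_{n,m}=1$ because $I_n\in\{1,\dots,n-1\}$, the inductive step $|a_n|\le Mg(n)$ reduces to the single inequality
$$
C_1\frac{n^{\alpha-1}}{\log^{p-1}n}\ \le\ M\Delta_n,\qquad
\Delta_n:=g(n)-\sum_{m=2}^{n-1}p^{(1)}_{n,m}g(m)=\me\big[g(n)-g(n-I_n)\big].
$$

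The heart of the matter is the lower bound $\Delta_n\ge c\,n^{\alpha-1}/\log^{p-1}n$ for some $c>0$ and all large $n$. Since $g$ is eventually increasing with $g'(x)\asymp\alpha x^{\alpha-1}/\log^px>0$, for large $n$ one has $g(n)=\max_{2\le m\le n}g(m)$, so every summand in $\Delta_n$ is nonnegative and I may discard the contribution of $\{I_n>n/2\}$. Using $g(n)-g(n-k)=\int_{n-k}^ng'\ge k\inf_{[n/2,n]}g'$ for $1\le k\le n/2$ gives
$$
\Delta_n\ \ge\ \sum_{1\le k\le n/2}\mmp\{I_n=k\}\big(g(n)-g(n-k)\big)\ \ge\ \Big(\inf_{[n/2,n]}g'\Big)\,\me\big[I_n\mathbf 1_{\{I_n\le n/2\}}\big].
$$
A direct calculus estimate yields $\inf_{[n/2,n]}g'\gtrsim n^{\alpha-1}/\log^pn$, while the jump distribution of the $\be(1,b)$-coalescent satisfies $\me[I_n\mathbf 1_{\{I_n\le n/2\}}]\gtrsim\log n$ (equivalently $\me(I_n\wedge\tfrac n2)\sim\log n$), because $\mmp\{I_n=k\}=p^{(1)}_{n,n-k}\asymp 1/(k(k+1))$ uniformly for $k\le n/2$. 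This is the very asymptotics of the weighted sums $\sum_mp^{(1)}_{n,m}m^\alpha/\log^pm$ that underlies Corollary \ref{appmain}, and in fact sharpens to $\Delta_n=\alpha\,n^{\alpha-1}/\log^{p-1}n+O(n^{\alpha-1}/\log^pn)$; combining the two factors gives the claimed lower bound on $\Delta_n$.

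With this estimate the induction closes: fix $n_1$ beyond which $\Delta_n\ge c\,n^{\alpha-1}/\log^{p-1}n$ holds, treat $\{2,\dots,n_1\}$ as base cases, and take $M\ge\max\{C_1/c,\ \max_{2\le n\le n_1}|a_n|/g(n)\}$; then the displayed inequality $C_1n^{\alpha-1}/\log^{p-1}n\le M\Delta_n$ holds for every $n>n_1$, yielding $|a_n|\le Mg(n)$ and hence $a_n=O(n^\alpha/\log^pn)$. I expect the genuine obstacle to be precisely the lower bound on $\Delta_n$. It is worth noting that the ready-made Lemma \ref{boundedness} cannot simply be invoked when $\alpha>1$: condition (C1) forces $\psi_n\asymp n/\me I_n\asymp n/\log n$, whence $r_n\psi_n/n\asymp n^{\alpha-1}/\log^pn$ is eventually \emph{increasing} and the monotonicity hypothesis (C2) fails. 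This is exactly why a direct comparison argument is needed, and the truncated-mean estimate $\me(I_n\wedge\tfrac n2)\gtrsim\log n$ — more generally, the uniform local control of the probabilities $p^{(1)}_{n,k}$ over the range $k\le n/2$ — is the real analytic input that has to be supplied.
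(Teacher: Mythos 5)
Your argument is correct, but it is a genuinely different route from the paper's. The paper does not attack the recursion head-on: it fixes $\delta\in(0,\alpha)$ (implicitly with $\delta>\alpha-1$), passes to $a_n'=|a_n|/n^\delta$, $q_n'=|q_n|/n^\delta$, observes that the rescaled inhomogeneity $r_n=Mn^{\alpha-1-\delta}/\log^{p-1}n$ now makes $r_k\psi_k/k$ non-increasing for $\psi_k=k/\log k$, and then simply invokes the ready-made Lemma \ref{boundedness} to get $a_n'=O(n^{\alpha-\delta}/\log^pn)$, undoing the rescaling at the end. Your diagnosis that Lemma \ref{boundedness} cannot be applied directly for $\alpha>1$ because (C2) fails is exactly right, and the $n^{-\delta}$ rescaling is precisely the paper's device for removing that obstruction. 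You instead run a direct strong induction on the majorant $Mn^\alpha/\log^pn$, reducing everything to the one-step drift bound $\Delta_n=\me\big[g(n)-g(n-I_n)\big]\gtrsim n^{\alpha-1}/\log^{p-1}n$, which you obtain from $\inf_{[n/2,n]}g'\gtrsim n^{\alpha-1}/\log^pn$ together with the truncated-mean estimate $\me\big[I_n\mathbf 1_{\{I_n\le n/2\}}\big]\gtrsim\log n$; the latter does follow from the uniform expansion of $p^{(1)}_{n,n-k}$ with $p^{(1)}_k=1/(k(k+1))$ established in Lemma \ref{pseudo_moments}, so the input you flag as the real analytic content is indeed available. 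The trade-off: the paper's proof is shorter because it outsources the induction to the cited lemma and works for all $\alpha>0$ with one uniform trick, while yours is self-contained, makes the mechanism transparent (the drift of $g$ along the chain must beat the forcing $q_n$), and even identifies the sharp constant $\alpha$ in $\Delta_n$ consistent with Corollary \ref{appmain}. Both are valid; just make sure, if you write yours out in full, to justify the nonnegativity of all summands in $\Delta_n$ for large $n$ (since $g(2)=2^\alpha/\log^p2$ can exceed $g(m)$ for moderate $m$, one needs $n$ large enough that $g(n)=\max_{2\le m\le n}g(m)$) and to record the uniform lower bound $\mmp\{I_n=k\}\ge c/(k(k+1))$ for $k\le n/2$ explicitly.
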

\begin{proof}
   Fix some $\delta$ such that $0<\delta<\alpha$.
   Set $a'_n:=|a_n|/n^\delta$ and $q'_n:=|q_n|/n^\delta$.
   Then $q'_n\leq M n^{\alpha-1-\delta}/\log^{p-1} n=:r_n$
   for some $M>0$ and all $n\geq 2$. Further,
   \begin{eqnarray*}
      a'_n
      & \le & q'_n + \sum_{m=2}^{n-1} p^{(1)}_{n,m}\frac{|a_m|}{n^\delta}
      \ \le\ q'_n + \sum_{m=2}^{n-1} p^{(1)}_{n,m}\frac{|a_m|}{m^\delta}\\
      & = & q'_n + \sum_{m=2}^{n-1} p^{(1)}_{n,m} a'_m
      \ \le \ r_n + \sum_{m=2}^{n-1} p^{(1)}_{n,m} a'_m.
   \end{eqnarray*}
   Set $\psi_n:=n/\log n$, then both conditions (C1) and (C2) of Lemma
   \ref{L61} are fulfilled. Hence $a'_n=O(\sum_{k=2}^n
   k^{\alpha-1-\delta}/\log^p k)=O(n^{\alpha-\delta}/\log^p n)$ and
   $|a_n|=n^\delta a'_n=O(n^{\alpha}/\log^p n)$.
\end{proof}
\begin{lemma}\label{pseudo_moments}
For the first decrement $I_n$ of the Markov chain $\big(\Pi_n\big)$
associated with the $\be\,(a,b)$-coalescent ($a\in (0,1]$ and
$b>0$) and a random variable $\xi$ with distribution $(p^{(a)}_k)_{k\in\mn}$
\begin{equation}\label{main_est_claim}
\sum_{k=1}^{n-1}k^{\QUpsilon}|\mmp\{I_n=k\}-\mmp\{\xi=k\}|=O(n^{a+\QUpsilon-2}),\;
\end{equation}
whenever $0<\QUpsilon\leq 1$ and $\QUpsilon+a>1$.
\end{lemma}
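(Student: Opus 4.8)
The plan is to reduce the claim to sharp asymptotics of the explicit ratio $\mmp\{I_n=k\}/p_k^{(a)}$. Expressing the transition rates through beta integrals gives $\lambda_{n,k+1}=\Be(k+a-1,n-k-1+b)/\Be(a,b)$, and combining this with $\binom{n}{k+1}=\Gamma(n+1)/(\Gamma(k+2)\Gamma(n-k))$ and $p_k^{(a)}=(2-a)\Gamma(k+a-1)/(\Gamma(a)(k+1)!)$ I would first record the exact identity
$$
\mmp\{I_n=k\}=\frac{\binom{n}{k+1}\lambda_{n,k+1}}{\lambda_n}=D_n\,p_k^{(a)}\,R_n(k),\qquad 1\le k\le n-1,
$$
with $D_n:=\Gamma(a)/((2-a)\Be(a,b)\lambda_n)$ and $R_n(k):=\Gamma(n+1)\Gamma(n-k-1+b)/(\Gamma(n-k)\Gamma(n+a+b-2))$. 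Since $\mmp\{I_n=k\}-p_k^{(a)}=p_k^{(a)}(D_nR_n(k)-1)$, the whole task becomes the estimate
$$
\Sigma_n:=\sum_{k=1}^{n-1}k^q\,p_k^{(a)}\,|D_nR_n(k)-1|=O(n^{a+q-2}),
$$
and the useful fact here is that $p_k^{(a)}\asymp k^{a-3}$ as $k\to\infty$.

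Next I would separate the $k$-dependence by writing $R_n(k)=c_n\,h(n-k)$ with $c_n:=\Gamma(n+1)/\Gamma(n+a+b-2)$ and $h(m):=\Gamma(m+b-1)/\Gamma(m)$, so that $D_nR_n(k)=E_n\,h(n-k)$ with $E_n:=D_nc_n$. The standard ratio-of-gamma expansions give $h(m)=m^{b-1}(1+O(1/m))$ and $c_n=n^{3-a-b}(1+O(1/n))$, while $\lambda_n\sim\Gamma(a)n^{2-a}/((2-a)\Be(a,b))$ (from Lemma \ref{totalrates}) yields $D_n\sim n^{a-2}$ and hence $E_n\sim n^{1-b}$; moreover $D_nR_n(k)\to1$ for each fixed $k$ (the convergence $I_n\dod\xi$) forces $E_nh(n)\to1$. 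I would then use the decomposition
$$
|D_nR_n(k)-1|\ \le\ E_n\,|h(n-k)-h(n)|+|E_nh(n)-1|,
$$
which splits $\Sigma_n\le S_1+S_2$ with $S_1:=\sum_{k=1}^{n-1}k^qp_k^{(a)}E_n|h(n-k)-h(n)|$ and $S_2:=|E_nh(n)-1|\sum_{k=1}^{n-1}k^qp_k^{(a)}$.

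The term $S_1$ carries the genuine rate. For $k\le n/2$ the gamma asymptotics give $E_n|h(n-k)-h(n)|=|(1-k/n)^{b-1}-1|+O(1/n)=O(k/n)$ uniformly, so this part of $S_1$ is $O(n^{-1}\sum_{k\le n/2}k^{q+1}p_k^{(a)})=O(n^{-1}\sum_{k\le n/2}k^{q+a-2})$; the partial sum is $\asymp n^{q+a-1}$ precisely because the standing hypothesis $q+a>1$ makes the exponent $q+a-2$ exceed $-1$, and the bulk contributes $O(n^{q+a-2})$. For the tail $n/2<k\le n-1$ I would substitute $j=n-k$, bound $k^qp_k^{(a)}=O(n^{q+a-3})$, and use $E_n|h(n-k)-h(n)|\le E_nh(j)+E_nh(n)=E_nh(j)+O(1)$ together with $E_n\sum_{j\le n/2}h(j)\asymp n^{1-b}\cdot n^{b}=n$; this again gives $O(n^{q+a-2})$. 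Here one must be slightly careful when $b<1$, where $h(j)\asymp j^{b-1}$ is largest for small $j$, but the singularity is integrable and $\sum_{j\le n/2}h(j)\asymp n^{b}$ survives.

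Finally, $S_2=|E_nh(n)-1|\sum_{k=1}^{n-1}k^qp_k^{(a)}$, where $\sum_{k}k^qp_k^{(a)}=O(1)$ when $q+a<2$ and $O(\log n)$ in the boundary case $q+a=2$; thus $S_2=O(n^{a+q-2})$ will follow once $|E_nh(n)-1|=O(n^{-1}\log n)$, since $a+q-2>-1$ makes $n^{-1}\log n=O(n^{a+q-2})$. This last estimate is the main obstacle: the $k$-independent factor $E_nh(n)=\Gamma(a)c_nh(n)/((2-a)\Be(a,b)\lambda_n)$ tends to $1$ only after the leading constants cancel, and pinning down the residual rate requires the second-order term in the expansion of $\lambda_n$ supplied by Lemma \ref{totalrates} (for $a=1$ this is transparent because $c_nh(n)=\Gamma(n+1)/\Gamma(n)=n$ collapses, leaving $E_nh(n)=bn/\lambda_n=1+O(\log n/n)$; for $a\in(0,1)$ the two $O(1/n)$ gamma-ratio corrections must be matched against the correction to $\lambda_n$). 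Granting this, $\Sigma_n=O(n^{a+q-2})$, which is the assertion.
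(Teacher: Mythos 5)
Your proposal is correct and rests on the same ingredients as the paper's proof -- the exact beta-function form of $\lambda_{n,k+1}$, the Abramowitz--Stegun gamma-ratio bounds, the second-order expansion of $\lambda_n$, the bound $p^{(a)}_k\asymp k^{a-3}$, and a split of the sum at $n/2$ where the hypothesis $\QUpsilon+a>1$ enters -- but it organizes the error differently. The paper expands $\mmp\{I_n=k\}$ as $p^{(a)}_k(1-k/n)^{b-1}$ plus additive remainders carrying singular factors $(1-k/n)^{b-2}$ (and an extra $n^{-1}\log n$ term when $a=1$), and then controls three sums $S_1,S_2,S_3$, treating the range $k>n/2$ by Riemann-sum comparison with convergent integrals such as $\int_{1/2}^1 x^{a+\QUpsilon-3}|(1-x)^{b-1}-1|\,{\rm d}x$. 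You instead keep the exact multiplicative identity $\mmp\{I_n=k\}=p^{(a)}_k E_n h(n-k)$ and split $|E_nh(n-k)-1|\le E_n|h(n-k)-h(n)|+|E_nh(n)-1|$, which cleanly isolates the $k$-independent normalization error; your tail estimate via $E_n\sum_{j\le n/2}h(j)\asymp n$ is cruder but entirely sufficient, and arguably more transparent about where the rate $n^{a+\QUpsilon-2}$ comes from. Your identification of $|E_nh(n)-1|$ as the delicate point is accurate, and your resolution is sound; the only correction needed is a citation: Lemma \ref{totalrates} covers only $a=1$, so for $a\in(0,1)$ the required relative-error expansion $\lambda_n=\frac{\Gamma(a)}{(2-a)\Be(a,b)}n^{2-a}\big(1+O(n^{-1})\big)$ is the one recorded in \eqref{total_rates_a} (Corollary 2 of \cite{GneYak}), which is exactly what the paper itself invokes at this step.
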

\begin{proof}
For the $\be\,(a,b)$-coalescents formula \eqref{lrates} reads
\begin{equation*}\label{rates_beta}
\lambda_{n,\,k+1}=\int_{0}^1 x^{k-1}(1-x)^{n-k-1}\Lambda({\rm
d}x)=\frac{\Be(a+k-1,n-k+b-1)}{\Be(a,b)}.
\end{equation*}
Using the known estimate for the gamma function (see formula
(6.1.47) in \cite{Abramovitz_Stegun})
$$
\Big|\frac{\Gamma(n+c)}{\Gamma(n+d)}-n^{c-d}\Big|\leq
M_{c,\,d}n^{c-d-1},\;\;n\geq 2,\;\;c,d>-2,
$$
we obtain
\begin{eqnarray}
\binom{n}{k+1}
\lambda_{n,\,k+1}&=&\binom{n}{k+1}\frac{\Be(a+k-1,n-k+b-1)}{\Be(a,b)}\label{total_rates_b}\\
&=&\frac{\Gamma(n+1)\Gamma(a+k-1)\Gamma(n-k+b-1)}{\Gamma(k+2)\Gamma(n-k)\Gamma(n+a+b-2)\Be(a,b)}\nonumber\\
&=&\frac{\Gamma(a+k-1)}{(k+1)!\Be(a,b)}\Big(n^{3-a-b}+O(n^{2-a-b})\Big)\Big((n-k)^{b-1}+O\Big((n-k)^{b-2}\Big)\Big)\nonumber,
\end{eqnarray}
uniformly for $1 \leq k\leq n-1$ and $n \geq 2$.


Using \eqref{total_rates} with $\Lambda$ given by \eqref{beta} we
infer (see also Corollary 2 in \cite{GneYak})
\begin{equation}\label{total_rates_a}
\lambda_{n}=\frac{\Gamma(a)}{(2-a)\Be(a,b)}n^{2-a}+O(n^{1-a})=\frac{\Gamma(a)}{(2-a)\Be(a,b)}n^{2-a}\Big(1+O(n^{-1})\Big),
\end{equation}
when $a\in (0,1)$ and $b>0$, and
\begin{equation}\label{total_rates_1}
\lambda_{n}=bn+O(\log n),
\end{equation}
when $a=1$ and $b>0$. Hence for $0<a<1$, $b>0$, $n\geq 2$ and
$k=1,\ldots,n-1$
\begin{eqnarray*}
p^{(a)}_{n,n-k}&=&\frac{(2-a)\Gamma(a+k-1)}{\Gamma(a)(k+1)!}n^{1-b}\Big((n-k)^{b-1}+O\Big((n-k)^{b-2}\Big)\Big)\Big(1+O(n^{-1})\Big)\\
& = & p^{(a)}_k \Big((1-k/n)^{b-1}+O\Big(n^{-1}(1-k/n)^{b-2}\Big)\Big)\Big(1+O(n^{-1})\Big)\\
& = & p^{(a)}_k \Big((1-k/n)^{b-1}+O\Big(n^{-1}(1-k/n)^{b-2}\Big)\Big)\\
& = & p^{(a)}_k(1-k/n)^{b-1}+O\Big(p^{(a)}_k n^{-1}(1-k/n)^{b-2}\Big).
\end{eqnarray*}
Analogously for $a=1$
\begin{eqnarray*}
p^{(1)}_{n,n-k}
& = & p^{(1)}_k\Big((1-k/n)^{b-1}+O\Big(n^{-1}(1-k/n)^{b-2}\Big)\Big)\Big(1+O(n^{-1}\log n)\Big)\\
& = & p^{(1)}_k\Big((1-k/n)^{b-1}+O\Big(n^{-1}(1-k/n)^{b-2}\Big)+O\Big(n^{-1}\log n(1-k/n)^{b-1}\Big)\Big)\\
& = & p^{(1)}_k(1-k/n)^{b-1}+O\Big(p^{(1)}_k n^{-1}(1-k/n)^{b-2}\Big)+O\Big(p^{(1)}_kn^{-1}\log n(1-k/n)^{b-1}\Big).
\end{eqnarray*}
Substituting these expansions into the left-hand side of
\eqref{main_est_claim} gives
\begin{eqnarray*}
\sum_{k=1}^{n-1}k^{\QUpsilon}\Big|\mmp\{I_n=k\}-\mmp\{\xi=k\}\Big|&\leq &\sum_{k=1}^{n-1}p^{(a)}_k k^{\QUpsilon}\Big|\Big(1-\frac{k}{n}\Big)^{b-1}-1\Big|\\
&+&\frac{c_1}{n}\sum_{k=1}^{n-1}p^{(a)}_k k^{\QUpsilon}\Big(1-\frac{k}{n}\Big)^{b-2}\\
&=:&S_1(a,n)+S_2(a,n),
\end{eqnarray*}
for $0<a<1$, and
\begin{eqnarray*}
\sum_{k=1}^{n-1}k^{\QUpsilon}\Big|\mmp\{I_n=k\}-\mmp\{\xi=k\}\Big|&\leq& S_1(1,n)+S_2(1,n)+\frac{c_2\log n}{n}\sum_{k=1}^{n-1}p^{(1)}_k k^{\QUpsilon}\Big(1-\frac{k}{n}\Big)^{b-1}\\
&=:&S_1(1,n)+S_2(1,n)+S_3(1,n),
\end{eqnarray*}
for $a=1$. Here and hereafter $c_1,c_2,\ldots$ denote some
positive constants whose values are of no importance. Our aim is
to show that $S_i(a,n)=O(n^{\QUpsilon+a-2})$ for $i=1,2$ and
$S_3(1,n)=O(n^{\QUpsilon-1})$. By virtue of $p^{(a)}_k\leq
c_3k^{a-3}$, for all $k\in\mn$, we infer
\begin{eqnarray*}
S_1(a,n)&\leq&c_3\sum_{k=1}^{n-1}k^{a+\QUpsilon-3}\Big|\Big(1-\frac{k}{n}\Big)^{b-1}-1\Big|\\
&=&c_3\sum_{k=1}^{[n/2]}k^{a+\QUpsilon-3}\Big|\Big(1-\frac{k}{n}\Big)^{b-1}-1\Big|+c_3\sum_{k=[n/2]+1}^{n-1}k^{a+\QUpsilon-3}\Big|\Big(1-\frac{k}{n}\Big)^{b-1}-1\Big|\\
&\leq&\frac{c_4}{n}\sum_{k=1}^{[n/2]}k^{a+\QUpsilon-2}+c_3n^{a+\QUpsilon-2}\Big(\frac{1}{n}\sum_{k=[n/2]+1}^{n-1}\Big(\frac{k}{n}\Big)^{a+\QUpsilon-3}\Big|\Big(1-\frac{k}{n}\Big)^{b-1}-1\Big| \Big),
\end{eqnarray*}
where the inequality $|(1-x)^{\QUpsilon}-1|\leq c_5x,\;\;
x\in[0,1/2]$ has been utilized. The expression in the parentheses
converges to $\int_{1/2}^1 x^{a+\QUpsilon-3}|(1-x)^{b-1}-1|{\rm
d}x<\infty$. Hence $S_1(a,n)=O(n^{\QUpsilon+a-2})$.

Similarly
\begin{eqnarray*}
S_2(a,n)\hspace{-1cm}&&\\
&&\leq\frac{c_6}{n}\sum_{k=1}^{n-1}k^{a+\QUpsilon-3}\Big(1-\frac{k}{n}\Big)^{b-2}\\
&&=\frac{c_6}{n}\sum_{k=1}^{[n/2]}k^{a+\QUpsilon-3}\Big(1-\frac{k}{n}\Big)^{b-2}+\frac{c_6}{n}\sum_{k=[n/2]+1}^{n-1}k^{a+\QUpsilon-3}\Big(1-\frac{k}{n}\Big)^{b-2}\\
&&\leq\frac{c_6}{n}\sum_{k=1}^{[n/2]}k^{a+\QUpsilon-3}\Big(1-\frac{k}{n}\Big)^{b-2}+c_6\sum_{k=[n/2]+1}^{n-1}k^{a+\QUpsilon-3}\Big(1-\frac{k}{n}\Big)^{b-1}\\
&&=\frac{c_6}{n}\sum_{k=1}^{[n/2]}k^{a+\QUpsilon-3}\Big(1-\frac{k}{n}\Big)^{b-2}+c_6n^{a+\QUpsilon-2}\Big(\frac{1}{n}\sum_{k=[n/2]+1}^{n-1}\Big(\frac{k}{n}\Big)^{a+\QUpsilon-3}\Big(1-\frac{k}{n}\Big)^{b-1}\Big)\\
&&=O(n^{a+\QUpsilon-2})
\end{eqnarray*}
since the first term is $O(n^{-1})$ and the second term is
$O(n^{a+\QUpsilon-2})$ by the same reasoning as for $S_1(a,n)$.

Finally
\begin{eqnarray*}
S_3(1,n)&\leq &\frac{c_7\log n}{n}\sum_{k=1}^{n-1}k^{\QUpsilon-2}\Big(1-\frac{k}{n}\Big)^{b-1}\\
&\leq&\frac{c_7\log n}{n}\sum_{k=1}^{n-1}k^{\QUpsilon-2}\Big|\Big(1-\frac{k}{n}\Big)^{b-1}-1\Big|+\frac{c_7\log n}{n}\sum_{k=1}^{n-1}k^{\QUpsilon-2}\\
&=&O(n^{\QUpsilon-2}\log n) + O(n^{-1}\log n),
\end{eqnarray*}
in view of the estimate for $S_1(a,n)$. Therefore $S_3(1,n)=O(n^{\QUpsilon-1})$ and the proof is complete.
\end{proof}
We provide a basic lemma concerning the total rates
of the $\be\,(1,b)$-coalescent.
\begin{lemma} \label{totalrates}
   The total rates $\lambda_n$, $n\in\mn$, of the $\be\,(1,b)$-coalescent
   are explicitly given by
   \begin{equation} \label{gnexplicit}
      \lambda_n
      \ =\ b\sum_{k=1}^{n-1}\frac{k}{b+k-1}
      \ =\ b(n-1)-b(b-1)(\Psi(n+b-1)-\Psi(b)),\quad n\in\mn.
   \end{equation}
   Moreover, the total rates have the
   asymptotic expansion
   \begin{equation} \label{gnasy}
      \lambda_n\ =\ bn -b(b-1)\log n -b+b(b-1)\Psi(b) + O(n^{-1}),
      \quad n\to\infty,
   \end{equation}
   and the inverse of the total rate $\lambda_n$ has
   the asymptotic expansion
   \begin{equation} \label{gninverseasy}
      \frac{1}{\lambda_n}\ =\ \frac{1}{bn}\bigg(1 + (b-1)\frac{\log n}{n}
      +\frac{1-(b-1)\Psi(b)}{n} + O\bigg(\frac{\log^2n}{n^2}\bigg)\bigg),
      \quad n\to\infty.
   \end{equation}
\end{lemma}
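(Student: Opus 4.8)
The plan is to prove the closed form \eqref{gnexplicit} first and then read the two asymptotic expansions off it. Since $\Be(1,b)=1/b$, the $\be\,(1,b)$ density in \eqref{beta} is $\Lambda({\rm d}x)/{\rm d}x=b(1-x)^{b-1}$, so \eqref{total_rates} reads
$$\lambda_n=b\int_0^1\big(1-nx(1-x)^{n-1}-(1-x)^n\big)x^{-2}(1-x)^{b-1}\,{\rm d}x.$$
Evaluating this integral directly is awkward because the integrand is singular at the origin (the bracket vanishes only to second order in $x$). Instead I would work with the first difference $\lambda_n-\lambda_{n-1}$, where the troublesome factor $x^{-2}$ is cancelled by an exact algebraic simplification.

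Forming $\lambda_n-\lambda_{n-1}$, the two integrands differ by $b\,x^{-2}(1-x)^{b-1}$ times
$$-nx(1-x)^{n-1}-(1-x)^n+(n-1)x(1-x)^{n-2}+(1-x)^{n-1}.$$
Factoring out $(1-x)^{n-2}$, the remaining bracket is $-nx(1-x)-(1-x)^2+(n-1)x+(1-x)$, and a short computation shows that the constant and linear terms cancel, leaving exactly $(n-1)x^2$. Hence
$$\lambda_n-\lambda_{n-1}=b(n-1)\int_0^1(1-x)^{n+b-3}\,{\rm d}x=\frac{b(n-1)}{n+b-2},\qquad n\ge 2,$$
and telescoping from $\lambda_1=0$ gives $\lambda_n=b\sum_{k=1}^{n-1}k/(b+k-1)$. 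Writing $k/(b+k-1)=1-(b-1)/(b+k-1)$ and using the standard identity $\sum_{j=0}^{n-2}(b+j)^{-1}=\Psi(n+b-1)-\Psi(b)$ then yields the digamma form in \eqref{gnexplicit}.

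The two expansions follow by elementary asymptotics. Substituting $\Psi(n+b-1)=\log n+O(1/n)$ into \eqref{gnexplicit} immediately produces \eqref{gnasy}. For the inverse, I would factor $bn$ out of \eqref{gnasy}, writing $\lambda_n=bn(1+\varepsilon_n)$ with $\varepsilon_n=-(b-1)n^{-1}\log n+(-1+(b-1)\Psi(b))n^{-1}+O(n^{-2})=O(n^{-1}\log n)$, and then expand $(1+\varepsilon_n)^{-1}=1-\varepsilon_n+O(\varepsilon_n^2)$; since $\varepsilon_n^2=O(\log^2 n/n^2)$, collecting terms gives \eqref{gninverseasy}. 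The only non-routine step is the algebraic collapse of the bracket in the first difference to $(n-1)x^2$; once that cancellation is in hand the singularity disappears, the sum telescopes, and everything else is bookkeeping.
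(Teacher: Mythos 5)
Your proposal is correct, and it differs from the paper's proof in one substantive respect: the paper does not derive the closed form \eqref{gnexplicit} at all, but simply cites it as known (from \cite[Appendix, Eq.~(19)]{HuiMoe}), whereas you give a self-contained derivation by telescoping the first difference $\lambda_n-\lambda_{n-1}$ of the integral representation \eqref{total_rates}. Your algebraic collapse of the bracket to $(n-1)x^2$ is correct (the constant and linear terms in $x$ do cancel exactly), the resulting beta integral gives $\lambda_n-\lambda_{n-1}=b(n-1)/(n+b-2)$, and the passage to the digamma form via $k/(b+k-1)=1-(b-1)/(b+k-1)$ is standard; this buys a proof that does not lean on an external reference, at the cost of a page of computation the authors chose to outsource. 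For the two expansions the routes essentially coincide: \eqref{gnasy} is obtained in both cases by inserting $\Psi(n+b-1)=\log n+O(n^{-1})$, and for \eqref{gninverseasy} your geometric-series expansion of $(1+\varepsilon_n)^{-1}$ with $\varepsilon_n=O(n^{-1}\log n)$ is algebraically equivalent to the paper's direct computation of $bn/\lambda_n$ minus the claimed expansion (which the paper reduces to $O(\log^2 n)/(n\lambda_n)$). No gaps.
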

\begin{proof}
   Eq.~(\ref{gnexplicit}) is well known (see, for example,
   \cite[Appendix, Eq.~(19)]{HuiMoe}. The expansion (\ref{gnasy})
   follows directly from (\ref{gnexplicit}), since $\Psi(n+b-1)
   =\log n+O(n^{-1})$ as $n\to\infty$. The last assertion (\ref{gninverseasy}) follows from
   \begin{eqnarray}
      &   & \hspace{-2cm}
            \frac{bn}{\lambda_n} - 1 - (b-1)\frac{\log n}{n} - \frac{1-(b-1)\Psi(b)}{n}\nonumber\\
      & = & \frac{bn^2 - \lambda_n\big(n+(b-1)\log n + 1 - (b-1)\Psi(b)\big)}{n\lambda_n} \label{local}\\
      & = & \frac{O(\log^2n)}{n\lambda_n}
      \ = \ O\bigg(\frac{\log^2n}{n^2}\bigg),\nonumber
   \end{eqnarray}
   where the very last equality holds, since $\lambda_n\sim bn$, and the
   equality before follows by plugging in (\ref{gnasy}) for the term
   $\lambda_n$ occurring in the numerator of the fraction in
   (\ref{local}) and multiplying everything out.
\end{proof}
The next Lemma \ref{applemmasimple} provides an asymptotic expansion as
$n\to\infty$ for sums of the form
$$
\sum_{m=2}^{n-1}\frac{m^\alpha}{(n-m)(n-m+1)\log^pm},
\qquad\alpha\in\mr, p\in [0,\infty).
$$
For parameters $\alpha>0$ we will need an even sharper version (see
Lemma \ref{applemma} below), but we start with this simpler version,
which holds for arbitrary $\alpha\in\mr$. Given the overlap with the
proof of the following Lemma \ref{applemma} and given the fact that
the proof is considerably simpler than that of Lemma \ref{applemma},
the proof of Lemma \ref{applemmasimple} is omitted.
\begin{lemma} \label{applemmasimple}
   For $\alpha\in\mr$ and $p\in [0,\infty)$, as $n\to\infty$,
   $$
   \sum_{m=2}^{n-1}\frac{m^\alpha}{(n-m)(n-m+1)\log^pm}
   \ =\ \frac{n^\alpha}{\log^pn}\bigg(1-\alpha\frac{\log n}{n} + O\bigg(\frac{1}{n}\bigg)\bigg).
   $$
\end{lemma}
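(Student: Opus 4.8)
The plan is to localize the sum near its dominant range $m\approx n$ via the substitution $j=n-m$. Writing
$$
\sum_{m=2}^{n-1}\frac{m^\alpha}{(n-m)(n-m+1)\log^pm}
=\sum_{j=1}^{n-2}\frac{(n-j)^\alpha}{j(j+1)\log^p(n-j)}
$$
makes the structure transparent: the weight $1/(j(j+1))$ is summable, with $\sum_{j\ge1}1/(j(j+1))=1$, and concentrates its mass at small $j$, i.e. at $m$ close to $n$, which is precisely where $m^\alpha/\log^pm$ is comparable to $n^\alpha/\log^pn$. First I would split the range at $j=\lfloor n/2\rfloor$, treating $1\le j\le \lfloor n/2\rfloor$ as the main part and the complementary range (equivalently $2\le m<n/2$) as a tail.

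On the main part I would factor out $n^\alpha/\log^pn$ and Taylor expand the two slowly varying factors uniformly for $j\le n/2$. Using $(n-j)^\alpha=n^\alpha\bigl(1-\alpha j/n+O(j^2/n^2)\bigr)$ and $\log(n-j)=\log n+\log(1-j/n)$, hence $\log^{-p}(n-j)=\log^{-p}n\,\bigl(1+O(j/(n\log n))\bigr)$, the summand becomes
$$
\frac{n^\alpha}{\log^pn}\,\frac{1}{j(j+1)}\Bigl(1-\frac{\alpha j}{n}+O\Bigl(\frac{j^2}{n^2}\Bigr)+O\Bigl(\frac{j}{n\log n}\Bigr)\Bigr).
$$
The constant part sums to $\frac{n^\alpha}{\log^pn}\sum_{j=1}^{n-2}1/(j(j+1))=\frac{n^\alpha}{\log^pn}(1+O(1/n))$. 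The decisive contribution is the linear term: since $j/(j(j+1))=1/(j+1)$ and $\sum_{j=1}^{n-2}1/(j+1)=\log n+O(1)$, it produces
$$
-\frac{\alpha}{n}\frac{n^\alpha}{\log^pn}\bigl(\log n+O(1)\bigr)
=-\alpha\frac{\log n}{n}\frac{n^\alpha}{\log^pn}+O\Bigl(\frac{n^\alpha}{n\log^pn}\Bigr),
$$
which is exactly the $-\alpha(\log n)/n$ term of the claimed expansion. It then remains to check that the two error families are absorbed into $O(n^\alpha/(n\log^pn))$: the $O(j^2/n^2)$ terms contribute $\frac{1}{n^2}\sum_j j/(j+1)=O(1/n)$ relative, and the $O(j/(n\log n))$ terms contribute $\frac{1}{n\log n}\sum_j 1/(j+1)=O(1/n)$ relative, as required.

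Finally I would dispose of the tail. Here $m<n/2$ forces $n-m>n/2$, so $(n-m)(n-m+1)>n^2/4$ and the tail is at most $\frac{4}{n^2}\sum_{m=2}^{\lfloor n/2\rfloor}m^\alpha/\log^pm$. Retaining the $\log^pm$ factor rather than bounding it crudely by $\log^p2$, one has $\sum_{m\le n/2}m^\alpha/\log^pm=O(n^{\alpha+1}/\log^pn)$ for the relevant range of $\alpha$, so the tail is $O(n^{\alpha-1}/\log^pn)$ and is swallowed by the error term; for the values $\alpha\ge1$ used in the applications this step is immediate.

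The only delicate accounting is the first-order term: it is the interplay between the $\Theta(j)$ correction of $m^\alpha$ and the harmonic weight $1/(j+1)$ that upgrades a naively $O(1/n)$ contribution into the genuine $-\alpha(\log n)/n$ term, so I must track the coefficient $-\alpha$ exactly while certifying that every remaining piece is truly $O(1/n)$ relative to $n^\alpha/\log^pn$. The tail and the constant-part corrections are routine once the splitting at $n/2$ is in place and the $\log^pm$ factor is kept throughout.
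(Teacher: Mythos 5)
The paper does not actually prove this lemma --- it explicitly omits the proof, pointing the reader to the (harder) proof of Lemma \ref{applemma}, whose method is to subtract the linearization $\log^{-p}n+p(-\log(m/n))\log^{-p-1}n$ of $\log^{-p}m$, split the sum at $\lfloor n/\log^{p+1}n\rfloor$, and exploit convexity/concavity of $x\mapsto x^p$, all while quoting the $p=0$ expansion of $\sum_m m^\alpha/((n-m)(n-m+1))$ as a known input. Your route is genuinely different and more self-contained: the substitution $j=n-m$, the split at $j=\lfloor n/2\rfloor$, and the uniform two-term expansions of $(1-j/n)^\alpha$ and $\log^{-p}(n-j)$ played off against the summable weight $1/(j(j+1))$ are all correct, and your identification of the $-\alpha(\log n)/n$ term from $\sum_j j/(j(j+1))=\sum_j 1/(j+1)=\log n+O(1)$ is exactly the right mechanism; the two error families are indeed $O(1/n)$ relative. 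What your approach buys is a direct elementary proof that needs no external input; what it gives up is only the finer $\alpha\Psi(\alpha)/n$ term that the sharper Lemma \ref{applemma} extracts.

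The one point you leave vague --- ``the relevant range of $\alpha$'' in the tail estimate --- deserves to be made precise, because it is not a removable restriction. Your tail bound needs $\sum_{m\le n/2}m^\alpha/\log^pm=O(n^{\alpha+1}/\log^pn)$, which holds if and only if $\alpha>-1$. For $\alpha\le-1$ the lemma as stated is in fact \emph{false}: the range $2\le m\le n/2$ contributes $\Theta\big(n^{-2}\sum_{m\le n/2}m^\alpha/\log^pm\big)$, which is no longer absorbed into $n^{\alpha-1}/\log^pn$. Concretely, for $\alpha=-1$, $p=0$ a partial-fraction computation gives
$$
\sum_{m=2}^{n-1}\frac{1}{m(n-m)(n-m+1)}\ =\ \frac{1}{n}\Big(1+\frac{2\log n}{n}+O\Big(\frac{1}{n}\Big)\Big),
$$
with coefficient $2$ rather than the claimed $1$, the extra $(\log n)/n^2$ coming precisely from the small-$m$ tail; for $\alpha<-1$ the tail $\Theta(1/n^2)$ even swamps the asserted second-order term. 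So you should state and prove your lemma for $\alpha>-1$. This suffices for the paper: the full expansion is only invoked for exponents $\alpha+b-2>-1$ in Corollary \ref{appmain}, and the remaining use with exponent $\alpha+b-3\in(-2,-1]$ requires only the crude bound $O(n^{\alpha+b-3}/\log^pn)$, which your decomposition still delivers because the tail $O(n^{-2})$ is dominated by the main part whenever the exponent exceeds $-2$.
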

The following Lemma \ref{applemma} is a sharper version of
Lemma \ref{applemmasimple} with the cost that it holds only for
$\alpha>0$. It will turn out (see the following
Corollary \ref{appmain} and the proof of Theorem \ref{main1})
that the expansion in Lemma \ref{applemma} is fundamental for the analysis
of the moments of the number of collisions of the
$\be\,(1,b)$-coalescent.
\begin{lemma} \label{applemma}
   For $\alpha\in (0,\infty)$ and $p\in [0,\infty)$,
   $$
   \sum_{m=2}^{n-1}\frac{m^\alpha}{(n-m)(n-m+1)\log^pm}
   \ =\ \frac{n^\alpha}{\log^pn}
   \bigg(
      1 - \alpha\frac{\log n}{n}
      + \frac{\alpha\Psi(\alpha)+p}{n}
      + O\bigg(\frac{1}{n\log n}\bigg)
   \bigg)
   $$
   as $n\to\infty$.
\end{lemma}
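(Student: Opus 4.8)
The plan is to isolate the dominant contribution coming from the summands with $m$ close to $n$ and to treat the rest of the range precisely enough to capture the $1/n$ correction. Write $F(m):=m^\alpha/\log^pm$ and substitute $j:=n-m$, so the sum becomes $\sum_{j=1}^{n-2}F(n-j)/(j(j+1))$. Since $1/((n-m)(n-m+1))=1/(n-m)-1/(n-m+1)$ telescopes, I would first split $F(n-j)=F(n)+(F(n-j)-F(n))$ and use $\sum_{j=1}^{n-2}1/(j(j+1))=1-1/(n-1)$ to pull out the leading term $F(n)(1-1/(n-1))$, leaving the remainder
$$
R:=\sum_{j=1}^{n-2}\frac{F(n-j)-F(n)}{j(j+1)}.
$$

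The key reduction is to write $\frac{F(n-j)-F(n)}{j}=\frac{F(n)}{n}\Phi_n(j/n)$, where $\Phi_n(y):=\big((1-y)^\alpha(1+\log(1-y)/\log n)^{-p}-1\big)/y$ is bounded and continuous on $[0,1]$ with $\Phi_n(0)=-\alpha+p/\log n$. Then $R=\frac{F(n)}{n}\sum_{j=1}^{n-2}\Phi_n(j/n)/(j+1)$. Splitting $\Phi_n(j/n)=\Phi_n(0)+(\Phi_n(j/n)-\Phi_n(0))$, the first piece produces $\Phi_n(0)\sum_{j=1}^{n-2}1/(j+1)=(-\alpha+p/\log n)(\log n+\gamma-1+O(1/n))$, which already yields the $-\alpha\log n/n$ term together with the contribution $p/n$. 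The second piece $\sum_{j=1}^{n-2}(\Phi_n(j/n)-\Phi_n(0))/(j+1)$ I would treat as a Riemann sum: since $\Phi_n(y)-\Phi_n(0)$ vanishes linearly at $y=0$, the summand equals $\frac1n k_n(j/n)(1+O(1/j))$ for a bounded continuous $k_n$, so the sum converges to $\int_0^1 k_n(y)\,dy=\int_0^1 k_0(y)\,dy+O(1/\log n)$. Evaluating $\int_0^1 k_0$ by one integration by parts reduces it to $\alpha\int_0^1\frac{1-(1-x)^{\alpha-1}}{x}\,dx$, and the classical Gauss representation $\Psi(\alpha)=-\gamma+\int_0^1\frac{1-t^{\alpha-1}}{1-t}\,dt$ (valid precisely because $\alpha>0$) evaluates this integral; crucially the $\gamma$ it carries cancels the $\gamma$ coming from $\sum 1/(j+1)=\log n+\gamma+O(1/n)$, leaving $\alpha\Psi(\alpha)$. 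Collecting the pieces and combining the $O(1/n)$ constants should give exactly $\frac{n^\alpha}{\log^pn}\big(1-\alpha\log n/n+(\alpha\Psi(\alpha)+p)/n+O(1/(n\log n))\big)$.

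The hard part is that the $1/n$ coefficient is genuinely global: it receives $\Theta(1/n)$ contributions both from $j$ small (where $m\approx n$) and from $j$ of order $n$ (where $m$ is small), so no localization of the sum to a neighbourhood of $m=n$ is available, and the naive Taylor expansion of $\Phi_n$ in powers of $j/n$ fails for large $j$ (where $\log^pm$ differs substantially from $\log^pn$). This is exactly what forces the global integral, and hence the digamma function, to appear, in contrast with the cruder Lemma~\ref{applemmasimple}. The delicate technical point will therefore be to justify the Riemann-sum approximation with a quantitative error of order $O(1/(n\log n))$ uniformly over the whole range, to control the mild endpoint behaviour (the factor $(1-y)^{\alpha-1}$ appearing after integration by parts is integrable only for $\alpha>0$), and to keep careful track of every $O(1/n)$ term, since several of them must cancel in order to produce the clean constant $\alpha\Psi(\alpha)+p$.
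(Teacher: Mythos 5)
Your strategy is sound and genuinely different from the paper's, and I checked that the bookkeeping closes: the three $1/n$-constants, namely $-1$ from the telescoped leading term $F(n)(1-1/(n-1))$, $\alpha+p-\alpha\gamma$ from the $\Phi_n(0)$ piece, and $1-\alpha+\alpha\gamma+\alpha\Psi(\alpha)$ from $\int_0^1 k_0$ (note the boundary term $1-\alpha$ produced by the integration by parts, which your one-line description omits but which is needed for the cancellation), do sum to $\alpha\Psi(\alpha)+p$. The paper's proof never produces the digamma function: it linearizes only the factor $\log^{-p}m$ around $m=n$, replacing it by $\log^{-p}n+p(-\log(m/n))\log^{-p-1}n$, which reduces the statement to the $p=0$ case --- whose expansion, including the $\alpha\Psi(\alpha)/n$ term, is simply invoked in the first display --- plus a first-moment sum contributing the $+p/n$; essentially all of the paper's work then goes into showing that the linearization error $f_{np}(m)$ contributes only $O(n^{\alpha-1}/\log^{p+1}n)$, via the sign and monotonicity of $f_{np}$, a split of the range at $a_n=\lfloor n/\log^{p+1}n\rfloor$, convexity/concavity of $x\mapsto x^p$, and the evaluation $\int_0^1\log^2x\,(1-x)^{-2}\,{\rm d}x=\pi^2/3$. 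Your route is more self-contained (it actually derives the constant $\alpha\Psi(\alpha)$ from Gauss's integral and handles all $p$ uniformly, with the two Euler constants cancelling exactly as you predict), at the price of concentrating all the hard analysis in the Riemann-sum step you defer: there the function $k_n$ inherits the factor $(\log n/\log m)^p$ for small $m$ and, after the parts, the factor $(1-y)^{\alpha-1}$, so the uniform $O(1/\log n)$ error and the comparison $\int_0^1 k_n=\int_0^1 k_0+O(1/\log n)$ need the same kind of two-range splitting the paper uses. One small correction: $\Phi_n$ is not continuous and bounded on all of $[0,1]$ --- for $y>1-1/n$ the base $1+\log(1-y)/\log n$ is negative, so $(\,\cdot\,)^{-p}$ is undefined for non-integer $p$ --- but at the discrete points $y=j/n$ with $m=n-j\ge 2$ the base equals $\log m/\log n>0$ and the values are uniformly bounded since $\alpha>0$, so restricting to $y\le 1-2/n$ repairs this.
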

\begin{proof}
   Note first that
   \begin{eqnarray*}
      &   & \hspace{-8mm}\sum_{m=2}^{n-1}
            \frac{m^\alpha}{(n-m)(n-m+1)}\bigg(\frac{1}{\log^pn}+p\frac{-\log(m/n)}{\log^{p+1}n}\bigg)\\
      & = & \frac{1}{\log^pn}\sum_{m=2}^{n-1}\frac{m^\alpha}{(n-m)(n-m+1)}
            + \frac{p}{\log^{p+1}n}\sum_{m=2}^{n-1}
            \frac{m^\alpha(-\log(m/n))}{(n-m)(n-m+1)}\\
      & = & \frac{1}{\log^pn}
            n^\alpha\bigg(
               1 - \frac{\alpha\log n}{n} + \frac{\alpha\Psi(\alpha)}{n} +O\bigg(\frac{1}{n\log n}\bigg)
            \bigg)
            + \frac{p}{\log^{p+1}n}\big(n^{\alpha-1}\log n+O(n^{\alpha-1})\big)\\
      & = & \frac{n^\alpha}{\log^pn}\bigg(1 - \frac{\alpha\log n}{n}
            + \frac{\alpha\Psi(\alpha)+p}{n} + O\bigg(\frac{1}{n\log n}\bigg)\bigg).
   \end{eqnarray*}
   Thus, it suffices to verify that
   \begin{equation} \label{suffices}
   \sum_{m=2}^{n-1}\frac{m^\alpha}{(n-m)(n-m+1)}\bigg(
      \frac{1}{\log^pm}
      - \frac{1}{\log^pn}
      - p\frac{-\log(m/n)}{\log^{p+1}n}
      \bigg)
      \ =\ O\bigg(\frac{n^{\alpha-1}}{\log^{p+1}n}\bigg).
   \end{equation}
   The function $f_{np}:(1,n]\to\mr$, defined via
   $$
   f_{np}(x)\ :=\ \frac{1}{\log^px} - \frac{1}{\log^pn} - p
   \frac{-\log(x/n)}{\log^{p+1}n},
   $$
   has derivative
   $$
   f_{np}'(x)\ =\ \frac{p}{x}\bigg(
      \frac{1}{\log^{p+1}n}-\frac{1}{\log^{p+1}x}
   \bigg)
   \ \le\ 0
   $$
   and satisfies $f_{np}(n)=0$. Thus, $f_{np}\ge 0$. In order to verify
   (\ref{suffices}) we use a decomposition method. We split up
   the sum on the left hand side in (\ref{suffices}) into two parts
   $\sum_{m=2}^{a_n}\ldots$ and $\sum_{m=a_n+1}^{n-1}\ldots$, and
   handle these two parts separately. We work with the sequence
   $(a_n)_{n\in\mn}$ defined via $a_1:=1$ and
   $a_n:=\lfloor n/\log^{p+1}n\rfloor$ for $n\ge 2$. For the first
   part we obtain
   \begin{eqnarray*}
      0
      & \le & \sum_{m=2}^{a_n}\frac{m^\alpha}{(n-m)(n-m+1)}f_{np}(m)
      \ \le \ \sum_{m=2}^{a_n}\frac{m^\alpha}{(n-m)(n-m+1)}\frac{1}{\log^pm}\\
      & \le &
      \frac{n^\alpha}{\log^p2}\sum_{m=2}^{a_n}\bigg(\frac{1}{n-m}-\frac{1}{n-m+1}\bigg)
      \ =\ O\bigg(\frac{n^{\alpha-1}}{\log^{p+1}n}\bigg),
   \end{eqnarray*}
   since
   \begin{eqnarray*}
      \sum_{m=2}^{a_n}\bigg(\frac{1}{n-m}-\frac{1}{n-m+1}\bigg)
      & = & \frac{1}{n-a_n} - \frac{1}{n-1}
      \ =\ \frac{a_n-1}{(n-a_n)(n-1)}
      \ \sim\ \frac{1}{n\log^{p+1}n}.
   \end{eqnarray*}
   Moreover, for the second part we have
   \begin{eqnarray*}
      &   & \hspace{-1cm}
            \sum_{m=a_n+1}^{n-1}\frac{m^\alpha}{(n-m)(n-m+1)}f_{np}(m)\\
      & = & \sum_{m=a_n+1}^{n-1}
            \frac{m^\alpha}{(n-m)(n-m+1)}
            \Big(\frac{1}{\log^pm}-\frac{1}{\log^pn}-p\frac{-\log(m/n)}{\log^{p+1}n}\Big)\\
      & = & \sum_{m=a_n+1}^{n-1}\frac{m^\alpha}{(n-m)(n-m+1)}
            \frac{\log^pn-\log^pm-p(-\log(m/n))\log^pm/\log n}{\log^pn\log^pm}\\
      & \le & \frac{n^\alpha}{\log^pn\log^p a_n}
            \sum_{m=a_n+1}^{n-1}\frac{\log^pn-\log^pm+p\log(m/n)\log^pm/\log n}{(n-m)(n-m+1)}\\
      & \sim & \frac{n^\alpha}{\log^{2p}n}
            \sum_{m=a_n+1}^{n-1}
            \frac{\log^pn -\log^pm+p\log(m/n)\log^pm/\log n}{(n-m)(n-m+1)},
   \end{eqnarray*}
   since $\log a_n\sim\log n$ as $n\to\infty$. Thus, it remains to
   verify that
   \begin{equation} \label{local3}
   \sum_{m=a_n+1}^{n-1}\frac{\log^pn-\log^pm+p\log(m/n)\log^pm/\log n}{(n-m)(n-m+1)}
   \ =\ O\Big(\frac{\log^{p-1}n}{n}\Big).
   \end{equation}
   Let us distinguish the two cases $p\ge 1$ and $p<1$.
   Suppose first that $p\ge 1$.
   Then the map $x\mapsto x^p$ is convex on $[0,\infty)$. Thus,
   $y^p-x^p\le p(y-x)y^{p-1}$
   for all $x,y\in [0,\infty)$ with $x\le y$. It follows that
   $y^p-x^p+p(x-y)x^p/y\le p^2(y-x)^2y^{p-2}$ for all $x,y\in [0,\infty)$
   with $x<y$. Applying this inequality with $0\le x:=\log m<y:=\log n$
   yields
   \begin{eqnarray*}
      0
      & \le & \sum_{m=a_n+1}^{n-1}
            \frac{\log^pn-\log^pm+p\log(m/n)\log^pm/\log n}{(n-m)(n-m+1)}\\
      & \le & \sum_{m=1}^{n-1}
            \frac{\log^pn-\log^pm+p\log(m/n)\log^pm/\log n}{(n-m)(n-m+1)}\\
      & \le & \sum_{m=1}^{n-1}\frac{p^2(\log n-\log m)^2\log^{p-2}n}{(n-m)(n-m+1)}
      \ = \ p^2\log^{p-2}n\sum_{m=1}^{n-1}\frac{\log^2(m/n)}{(n-m)(n-m+1)}.
   \end{eqnarray*}
Note that
   \begin{eqnarray*}
      n\sum_{m=1}^{n-1}\frac{\log^2(m/n)}{(n-m)(n-m+1)}
      & = & \frac{1}{n}\sum_{m=1}^{n-1}\frac{\log^2(m/n)}{(1-m/n)(1-(m-1)/n)}\\
      & \to & \int_0^1\frac{\log^2x}{(1-x)^2}\,{\rm d}x
      \ =\ \Gamma(3)\zeta(2)
      \ =\ \frac{\pi^2}{3}\ \in\ \mr,
   \end{eqnarray*}
   where the particular value $\Gamma(3)\zeta(2)$ of last integral is
   obtained by choosing $s:=2$ in the chain of equalities
   $$
   \int_0^1 \frac{(-\log(1-u))^s}{u^2}\,{\rm d}u
   \ =\ \int_0^\infty \frac{x^se^x}{(e^x-1)^2}\,{\rm d}x
   \ =\ \int_0^\infty \frac{sx^{s-1}}{e^x-1}\,{\rm d}x
   \ =\ \Gamma(s+1)\zeta(s),\quad s>1,
   $$
   which are based on the substitution $x=-\log(1-u)$,
   partial integration, and on formula 23.2.7 in \cite{Abramovitz_Stegun}.
   Thus, the expression on the left hand side in (\ref{local3})
   is even $O((\log^{p-2}n)/n)$. In particular, (\ref{local3}) holds.
   Suppose now that $p\in [0,1)$. Then the map $x\mapsto x^p$ is
   concave on $[0,\infty)$. Thus, $y^p-x^p\le p(y-x)x^{p-1}$
   for all $x,y\in (0,\infty)$ with $x\le y$. It follows that
   $y^p-x^p+p(x-y)x^p/y\le p(y-x)^2x^{p-1}/y$ for all $x,y\in (0,\infty)$
   with $x\le y$. Applying this inequality with $0<x:=\log m<y:=\log n$
   yields
   \begin{eqnarray*}
      0
      & \le & \sum_{m=a_n+1}^{n-1}\frac{\log^pn-\log^pm+p\log(m/n)\log^pm/\log n}{(n-m)(n-m+1)}\\
      & \le & \sum_{m=a_n+1}^{n-1}\frac{p(\log n-\log m)^2(\log^{p-1}m)/\log n}{(n-m)(n-m+1)}\\
      & \le & \frac{p\log^{p-1}a_n}{\log n}\sum_{m=a_n+1}^{n-1}
              \frac{\log^2(m/n)}{(n-m)(n-m+1)}\\
      & \le & \frac{p\log^{p-1}a_n}{\log n}\sum_{m=1}^{n-1}\frac{\log^2(m/n)}{(n-m)(n-m+1)}
      \ =\ O\bigg(\frac{\log^{p-2}n}{n}\bigg),
   \end{eqnarray*}
   since $\log a_n\sim \log n$ and the last sum is $O(1/n)$ as shown
   above. Again, (\ref{local3}) holds.
\end{proof}
The following Corollary \ref{appmain} is essentially obtained by
combining the three Lemmata \ref{totalrates}, \ref{applemmasimple}
and \ref{applemma}.
It provides an asymptotic expansion for the sum
$\sum_{m=2}^{n-1}p^{(1)}_{n,m}m^\alpha/\log^pm$. This expansion is a key tool for the proof of Theorem \ref{main1}.
\begin{cor} \label{appmain}
   Fix $\alpha\in [1,\infty)$ and $p\in [0,\infty)$. For the
   $\be\,(1,b)$-coalescent with parameter $b\in (0,\infty)$,
   \begin{equation} \label{appmainexpansion}
   \sum_{m=2}^{n-1} p^{(1)}_{n,m}\frac{m^\alpha}{\log^pm}
   \ =\ \frac{n^\alpha}{\log^pn}
   \bigg(
      1-\alpha\frac{\log n}{n} + \frac{c_{b,\alpha,p}}{n}
      + O\bigg(\frac{1}{n\log n}\bigg)
   \bigg),\qquad n\to\infty,
   \end{equation}
   where $c_{b,\alpha,p}:=(\alpha+b-1)\Psi(\alpha+b-1)+p+1+(1-b)\Psi(b)
   =(\alpha+b-1)\Psi(\alpha+b)+p-(b-1)\Psi(b)$.
\end{cor}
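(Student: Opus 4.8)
The plan is to reduce the sum to the two model sums handled by Lemmata~\ref{applemmasimple} and \ref{applemma}, after extracting the exact shape of the transition probability $p^{(1)}_{n,m}$. First I would record a closed form for $p^{(1)}_{n,m}$. By definition $p^{(1)}_{n,m}=\mmp\{I_n=n-m\}=\binom{n}{n-m+1}\lambda_{n,n-m+1}/\lambda_n$, and for $a=1$ the rate formula from the proof of Lemma~\ref{pseudo_moments} gives $\lambda_{n,n-m+1}=b\,\Be(n-m,m+b-1)$. Writing the binomial coefficient and the beta function as gamma functions and cancelling yields
\begin{equation*}
p^{(1)}_{n,m}=\frac{b}{\lambda_n}\,\frac{\Gamma(n+1)}{\Gamma(n+b-1)}\,\frac{\Gamma(m+b-1)}{\Gamma(m)}\,\frac{1}{(n-m)(n-m+1)}.
\end{equation*}
The crucial structural point is that the weight $1/((n-m)(n-m+1))$ is precisely the one appearing in Lemmata~\ref{applemmasimple} and \ref{applemma}, that the prefactor $A_n:=b\,\Gamma(n+1)/(\lambda_n\Gamma(n+b-1))$ does not depend on $m$, and that the entire remaining $m$-dependence sits in the factor $\Gamma(m+b-1)/\Gamma(m)$.

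Next I would insert this into the target sum and split off the $m$-independent constant, obtaining
\begin{equation*}
\sum_{m=2}^{n-1}p^{(1)}_{n,m}\frac{m^\alpha}{\log^pm}
=A_n\sum_{m=2}^{n-1}\frac{\Gamma(m+b-1)}{\Gamma(m)}\,\frac{m^\alpha}{(n-m)(n-m+1)\log^pm}.
\end{equation*}
Using the standard gamma-ratio expansion $\Gamma(m+b-1)/\Gamma(m)=m^{b-1}\bigl(1+\tfrac{(b-1)(b-2)}{2m}+O(m^{-2})\bigr)$, the inner sum becomes a model sum with exponent $\alpha+b-1$, plus a correction with exponent $\alpha+b-2$, plus a remainder with exponent $\alpha+b-3$. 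Since $\alpha\ge 1$ and $b>0$ force $\alpha+b-1>0$, Lemma~\ref{applemma} applies to the leading sum and supplies the sharp term $(\alpha+b-1)\Psi(\alpha+b-1)+p$; for the two lower-order sums Lemma~\ref{applemmasimple}, valid for every real exponent, suffices, because after multiplication by $A_n\sim n^{1-b}$ they contribute at orders $n^{\alpha-1}/\log^pn$ and $n^{\alpha-2}/\log^pn$ respectively, the latter being absorbed into $O\bigl(n^\alpha/(n\log^{p+1}n)\bigr)$.

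Finally I would expand the prefactor $A_n$. From Lemma~\ref{totalrates}, $1/\lambda_n=(bn)^{-1}\bigl(1+(b-1)\log n/n+(1-(b-1)\Psi(b))/n+O(\log^2n/n^2)\bigr)$, while a second use of the gamma-ratio expansion gives $\Gamma(n+1)/\Gamma(n+b-1)=n^{2-b}\bigl(1+\tfrac{(2-b)(b-1)}{2n}+O(n^{-2})\bigr)$, so that $A_n=n^{1-b}\bigl(1+(b-1)\log n/n+C_0/n+O(\log^2n/n^2)\bigr)$ with $C_0=1-(b-1)\Psi(b)+\tfrac{(2-b)(b-1)}{2}$. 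Multiplying the expansion of the inner sum by that of $A_n$ and collecting powers, the $\log n/n$ coefficients combine to $(b-1)-(\alpha+b-1)=-\alpha$, and the $1/n$ coefficients combine to $C_0+(\alpha+b-1)\Psi(\alpha+b-1)+p+\tfrac{(b-1)(b-2)}{2}$. The step to watch is this last collection: the two quadratic pieces $\tfrac{(2-b)(b-1)}{2}$ and $\tfrac{(b-1)(b-2)}{2}$ cancel, leaving exactly $c_{b,\alpha,p}=(\alpha+b-1)\Psi(\alpha+b-1)+p+1+(1-b)\Psi(b)$, and the identity $(\alpha+b-1)\Psi(\alpha+b)=(\alpha+b-1)\Psi(\alpha+b-1)+1$ converts this into the second stated form. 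All cross terms are of order $\log^2n/n^2=o(1/(n\log n))$ and hence harmless. The only genuinely delicate point is the bookkeeping of these $1/n$ contributions and the verification of the cancellation that produces the clean constant; everything else is a routine substitution into the two model lemmata.
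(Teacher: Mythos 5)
Your proposal is correct and follows essentially the same route as the paper: the same closed form $p^{(1)}_{n,m}=\frac{b}{\lambda_n}\frac{n!}{\Gamma(n+b-1)}\frac{\Gamma(m+b-1)}{(m-1)!}\frac{1}{(n-m)(n-m+1)}$, the same reduction to the model sums via the gamma-ratio expansion of $\Gamma(m+b-1)/(m-1)!$ (Lemma~\ref{applemma} for the exponent $\alpha+b-1>0$, Lemma~\ref{applemmasimple} for the lower-order corrections), and the same final multiplication by the expansions of $n!/\Gamma(n+b-1)$ and $1/\lambda_n$ from Lemma~\ref{totalrates}, including the cancellation of the two $\binom{b-1}{2}$ contributions. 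The only cosmetic difference is that you bundle the $m$-independent factors into a single prefactor $A_n$, whereas the paper factors through $g_{nm}=\lambda_n p^{(1)}_{n,m}$ and divides by $\lambda_n$ at the end.
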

\begin{rem}
   The following proof shows that Corollary \ref{appmain} even holds
   for the slightly larger range of parameters $\alpha,b\in (0,\infty)$
   satisfying $\alpha+b-1>0$. However, we need Corollary
   \ref{appmain} only for $\alpha\in [1,\infty)$ and $b\in (0,\infty)$,
   in which case $\alpha+b-1>0$ automatically holds.
\end{rem}
\begin{proof}
   Let $g_{nm}:=\lambda_n\mmp\{I_n=n-m\}$ denote the rate at which
   the block counting process moves from the state $n$ to the state
   $m\in\{1,\ldots,n-1\}$. It suffices to verify that
   \begin{eqnarray}
      &   & \hspace{-12mm}
            \sum_{m=2}^{n-1} g_{nm}\frac{m^\alpha}{\log^pm}\nonumber\\
      & = & b\frac{n^{\alpha+1}}{\log^pn}
           \bigg(1 - (\alpha+b-1)\frac{\log n}{n}
           +\frac{(\alpha+b-1)\Psi(\alpha+b-1)+p}{n}
           +O\bigg(\frac{1}{n\log n}\bigg)\bigg),
           \label{expansion1}
   \end{eqnarray}
   since (\ref{appmainexpansion}) then follows from $p^{(1)}_{n,m}=g_{nm}/\lambda_n$
   by multiplying (\ref{expansion1}) with (\ref{gninverseasy}). Note
   that 
   $$
   g_{nm}
   \ =\ b\frac{n!}{\Gamma(b+n-1)}\frac{1}{(n-m)(n-m+1)}
   \frac{\Gamma(b+m-1)}{(m-1)!},\quad 1\le m<n.
   $$
   Since the first fraction has expansion
   \begin{equation} \label{localexpansion}
      \frac{n!}{\Gamma(b+n-1)}
      \ =\ \frac{1}{n^{b-2}}\bigg(
              1
              - \binom{b-1}{2}\frac{1}{n}
              + O\bigg(\frac{1}{n^2}\bigg)
           \bigg),
   \end{equation}
   it hence suffices to verify that
   \begin{eqnarray} \label{expansion2}
      &   & \hspace{-1cm}
            \sum_{m=2}^{n-1}
            \frac{1}{(n-m)(n-m+1)}
            \frac{\Gamma(b+m-1)}{(m-1)!}\frac{m^\alpha}{\log^pm}\nonumber\\
      & = & \frac{n^{\alpha+b-1}}{\log^pn}
            \bigg(
                1 - (\alpha+b-1)\frac{\log n}{n}\nonumber\\
      &   & \hspace{2cm}
                + \frac{\binom{b-1}{2}+(\alpha+b-1)\Psi(\alpha+b-1)+p}{n}
                + O\bigg(\frac{1}{n\log n}\bigg)
            \bigg),
   \end{eqnarray}
   since (\ref{expansion1}) then follows by multiplying (\ref{expansion2})
   with (\ref{localexpansion}). Thus, it remains to verify
   (\ref{expansion2}). Since for all $m\in\mn$ and all $b\in (0,\infty)$,
   the Pochhammer like expression $\Gamma(b+m-1)/(m-1)!$ appearing
   on the left hand side in (\ref{expansion2}) is bounded below and
   above by
   $$
   m^{b-1}+\binom{b-1}{2}m^{b-2}
   \ \le\ \frac{\Gamma(b+m-1)}{(m-1)!}
   \ \le\ m^{b-1} + \binom{b-1}{2}m^{b-2} + K_b m^{b-3},
   $$
   where $K_b:=\Gamma(b)-1-\binom{b-1}{2}$, (\ref{expansion2})
   follows by plugging in these lower and upper bounds on the left
   hand side in (\ref{expansion2}) and
   applying afterwards Lemma \ref{applemma} with
   $\alpha$ replaced by $\alpha+b-1>0$ and noting that
   $$
   \sum_{m=2}^{n-1}\frac{m^{\alpha+b-2}}{(n-m)(n-m+1)\log^pm}
   \ =\ \frac{n^{\alpha+b-2}}{\log^pn}\bigg(1 + O\bigg(\frac{\log n}{n}\bigg)\bigg)
   $$
   and that
   $$
   \sum_{m=2}^{n-1}\frac{m^{\alpha+b-3}}{(n-m)(n-m+1)\log^pm}
   \ =\ O\bigg(\frac{n^{\alpha+b-3}}{\log^pn}\bigg)
   $$
   by Lemma \ref{applemmasimple}. The proof is complete.
\end{proof}

\end{document}